\UseRawInputEncoding
%
%
%
%
%
%
\RequirePackage{fix-cm}
\documentclass[smallextended]{svjour3}       
\smartqed  
\usepackage{color}
\usepackage[colorlinks=true,
            citecolor=blue,
            linkcolor=blue,
            anchorcolor=green,
            urlcolor=blue
            ]{hyperref}
\usepackage{graphicx}
\usepackage{lipsum}
\usepackage{amsfonts}
\usepackage{graphicx}
\usepackage{epstopdf}
\usepackage{algorithm}
\usepackage{algorithmic}
\usepackage{amsmath}
\usepackage{amssymb}
\usepackage{bm}
\usepackage{mathbbold}
\usepackage{mathrsfs}
\usepackage{multirow}
\usepackage{dcolumn}
\usepackage{listings}
\usepackage{subfigure}
\usepackage{enumerate}
\usepackage{marvosym}
\numberwithin{theorem}{section}
\numberwithin{corollary}{section}
\numberwithin{lemma}{section}
\numberwithin{remark}{section}
\numberwithin{definition}{section}

\newcommand{\ud}{\mathrm{d}}

\newtheorem{assum}{Assumption~}
\usepackage{amsopn}

\usepackage{mathptmx}      
%
\usepackage{latexsym}
%
%
\journalname{Mathematical Programming}
\begin{document}

\title{Asymptotic proximal point methods: finding the global minima with linear convergence for a class of multiple minima problems}
\titlerunning{Asymptotic proximal point methods}        

\author{Xiaopeng Luo \and Xin Xu \and Herschel A. Rabitz}


\institute{\Letter~Xiaopeng Luo \\
\email{luo.permanent@gmail.com} \\
\at Department of Control and Systems Engineering, School of Management and Engineering, Nanjing University, Nanjing, 210008, China \\
Department of Chemistry, Princeton University, Princeton, NJ 08544, USA
\and
Xin Xu \\
\email{xu.permanent@gmail.com} \\
\at Department of Control and Systems Engineering, School of Management and Engineering, Nanjing University, Nanjing, 210008, China \\
Department of Chemistry, Princeton University, Princeton, NJ 08544, USA
\and
Herschel A. Rabitz \\
\email{hrabitz@princeton.edu} \\
\at Department of Chemistry, Princeton University, Princeton, NJ 08544, USA \\
Program in Applied and Computational Mathematics, Princeton University, Princeton, NJ 08544, USA
}

\date{Received: date / Accepted: date}

\maketitle

\begin{abstract}
  We propose and analyze asymptotic proximal point (APP) methods to find the global minimizer for a class of nonconvex, nonsmooth, or even discontinuous multiple minima functions. The method is based on an asymptotic representation of nonconvex proximal points so that it can find the global minimizer without being trapped in saddle points, local minima, or even discontinuities. Our main result shows that the method enjoys the global linear convergence for such a class of functions. Furthermore, the method is derivative-free and its per-iteration cost, i.e., the number of function evaluations, is also bounded, so it has a complexity bound $\mathcal{O}(\log\frac{1}{\epsilon})$ for finding a point such that the gap between this point and the global minimizer is less than $\epsilon>0$. Numerical experiments and comparisons in various dimensions from $2$ to $500$ demonstrate the benefits of the method.

\keywords{multiple minima problem \and global minima \and proximal point method \and nonconvex \and nonsmooth \and derivative-free \and linear convergence}
\subclass{65K05 \and 68Q25 \and 90C26 \and 90C56}
\end{abstract}

\section{Introduction}
\label{APP:s1}

In this paper, we propose and analyze asymptotic proximal point (APP) methods for finding the global minima
\begin{equation}\label{APP:eq:P}
  x_*=\arg\min_{x\in\mathbb{R}^d}f(x),
\end{equation}
where the objective function $f$ satisfies the following assumption:

\begin{assum}\label{APP:ass:A}
The objective function $f:\mathbb{R}^d\to\mathbb{R}$ satisfies that there exist $x_*\in\mathbb{R}^d$ and $0<l\leqslant L<\infty$ such that for all $x\in\mathbb{R}^d$,
\begin{equation}\label{APP:eq:A}
  f_*+\frac{l}{2}\|x-x_*\|_2^2\leqslant f(x)
  \leqslant f_*+\frac{L}{2}\|x-x_*\|_2^2.
\end{equation}
Hence, $f$ has a unique global minimizer $x_*$ with $f_*:=f(x_*)$.
\end{assum}

\begin{figure}[!h]
\centering
\includegraphics[width=0.4\textwidth]{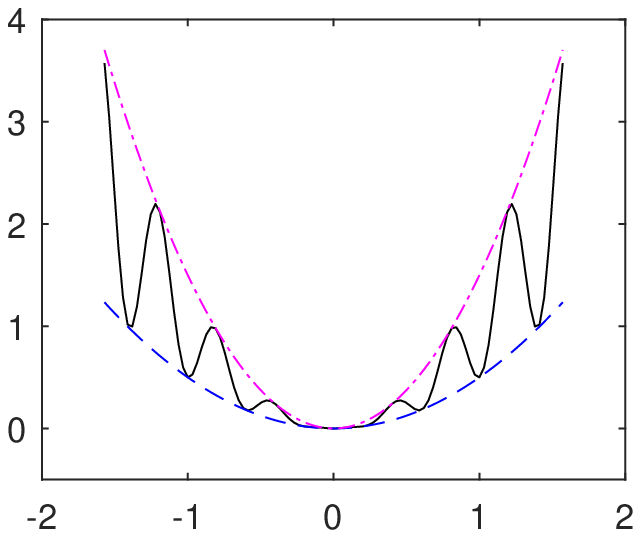}
\includegraphics[width=0.4\textwidth]{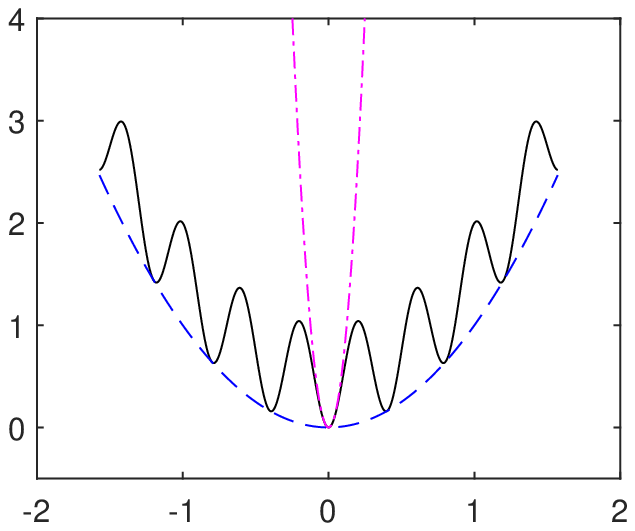}
\caption{One-dimensional examples. Left: the objective $f(x)=x^2+x^2\cos(5\pi x)/2$ (solid) with lower bound $x^2/2$ (dashed) and upper bound $3x^2/2$ (dash-dotted). Right: the objective $f(x)=x^2-\cos(5\pi x)/2+1/2$ (solid) with lower bound $x^2$ (dashed) and upper bound $65x^2$ (dash-dotted).}
\label{APP:fig:A}
\end{figure}

Obviously, such a class of functions is extended from strongly convex functions with Lipschitz-continuous gradients; however, as shown in Figure \ref{APP:fig:A}, it is not ruling out the possibility of local minima. The lower bound $f_*+\frac{l}{2}\|x-x_*\|_2^2$ guarantees the uniqueness of the global minima while the upper bound $f_*+\frac{L}{2}\|x-x_*\|_2^2$ controls the sharpness of the minima. Therefore, the objective $f$ is continuously differentiable at $x_*$ but may be nonsmooth or even discontinuous elsewhere.

\subsection{Motivational problem}

The motivation for introducing Assumption \ref{APP:ass:A} comes from, but is not limited to, the protein folding problem. One of its challenges is to devise an algorithm to accurately predict a protein's native structure from its amino acid sequence, and computational folding explores the process by which the protein proceeds through conformational states to states of lower free energies \cite{DillK2012_PF}. A number of studies \cite{BJ1995_PFFunnel,DillK1985_PF,BryngelsonJ1987_BF,YanZ2020_PFFunnel} indicated that protein-folding energy landscapes are funnel-shaped on a global scale with huge local minima on the local scales (left plot of Figure \ref{APP:fig:PF}).

\begin{figure}[!h]
\centering
\includegraphics[width=0.3\textwidth]{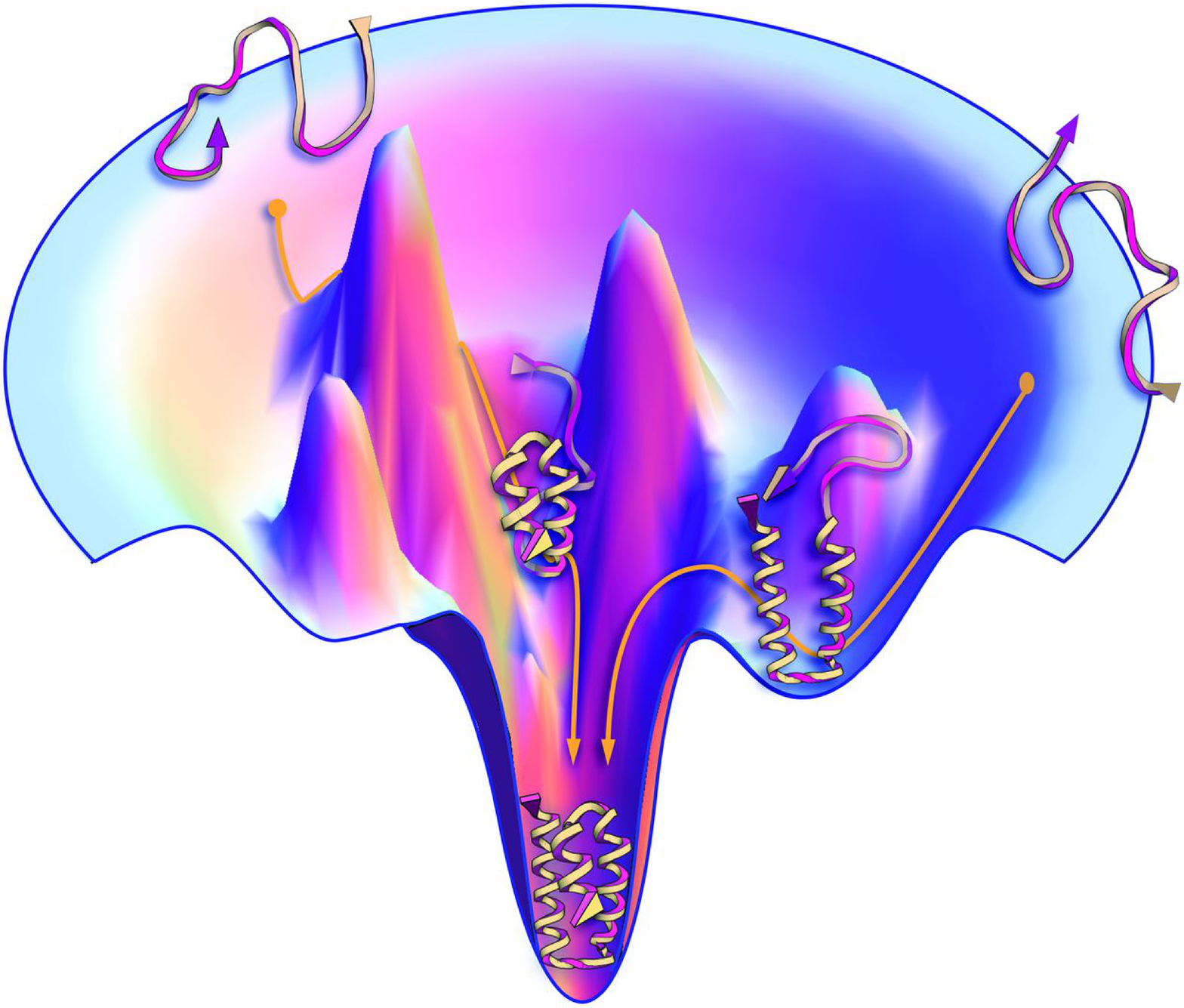}\qquad\quad
\includegraphics[width=0.4\textwidth]{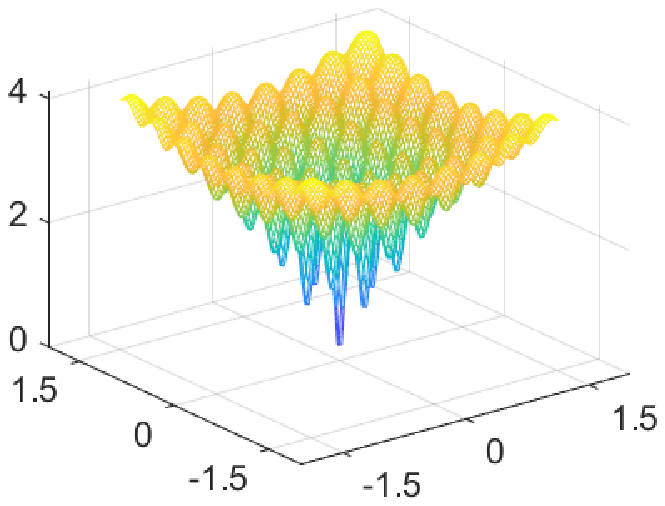}
\caption{Left: proteins have a funnel-shaped energy landscape with many
high-energy, unfolded structures and only a few low-energy, folded structures (Reproduced with the permission from \cite{DillK2012_PF}). Right: the $2$-D revised Rastrigin function defined in Sect. \ref{APP:s4}.}
\label{APP:fig:PF}
\end{figure}

This crucial finding strongly illustrates that the protein-folding energy landscapes is essentially similar to the revised Rastrigin function (right plot of Figure \ref{APP:fig:PF}) which satisfies our Assumption \ref{APP:ass:A} within an appropriate range. Therefore, in the following we will limit our discussion to the minimization of such explicit functions to avoid getting involved in complicated details about biochemistry and molecular dynamics (MD). Obviously, for obtaining the folding pathway from a certain initial state to the thermodynamically stable state \cite{EnglanderS2014_Pathways,WolynesP1995_Pathways}, one needs a sufficiently accurate forcefield model to participate in MD simulations in a suitable bath of explicit water molecules, in addition to using a guaranteed optimization algorithm to avoid the huge number of conformations corresponding to local minima of the folding energy landscape \cite{DillK2012_PF}.

Recently, according to the results from the CASP14 assessment, the DeepMind's latest system AlphaFold, which is much better than its previous version \cite{AlphaFoldP,AlphaFold}, can accurately predict a protein's native structure. However, machine learning methods are data-related. Either rare specific proteins that cannot be fully covered by historical data, or multi-molecule effects that lack sufficient data support, are in its blind spot. Hence, how to improve the traditional forcefield-MD-optimization algorithms to be comparable to AlphaFold is still an important issue. Even more, such an expected approach may also enable relevant researchers to better understand the mechanism of protein folding, meanwhile, this mechanism may also inspire some interesting ideas on global optimization problems, since the folding process usually occurs on timescales of milliseconds to seconds.

\subsection{Main idea and contributions}

Under Assumption \ref{APP:ass:A}, the objective $f$ has a unique global minima $x_*$ and possibly many multiple local minima. The goal here is to find this global minima $x_*$ without being trapped in saddle points, local minima, or even discontinuities. In nonconvex settings, we consider the following proximal point iteration
\begin{equation}\label{APP:eq:PPI}
  x_{k+1}=\arg\min_{x\in\mathbb{R}^d}
  \left(f(x)+\frac{\lambda}{2}\|x-x_k\|_2^2\right)
  ~~\textrm{for a fixed scalar}~~\lambda>0.
\end{equation}
First, we prove that, under the relevant conditions (see Theorem \ref{APP:thm:Ax}), the nonconvex proximal point $x_{k+1}$ can be asymptotically represented as
\begin{equation}\label{APP:eq:RS}
  x_{k+1}=\lim_{\alpha\to\infty}\frac{\int_{\mathbb{R}^d}x\exp
  \big[-\alpha\big(f(x)+\frac{\lambda}{2}\|x-x_k\|_2^2\big)\big]\ud x}
  {\int_{\mathbb{R}^d}\exp\big[-\alpha\big(f(x)+
  \frac{\lambda}{2}\|x-x_k\|_2^2\big)\big]\ud x}.
\end{equation}
This asymptotic representation is the basis for us to deal with nonconvex problems. And this is also the reason that our method is referred to as the \emph{asymptotic proximal point} method. Especially, we consider its asymptotic behavior when $\alpha$ is chosen to be $\alpha_k=\rho^{-k}$ and the two integrals in the ratio are replaced with Monte Carlo estimates.

More clearly, with an initial point $x_1$, three fixed parameters $\lambda>0$, $0<\rho<1$ and $n\in\mathbb{N}$, the original APP methods are characterized by the iteration
\begin{equation}\label{APP:eq:RAI}
  x_{k+1}=\frac{\sum_{i=1}^n\theta_i
  \exp\big(-\rho^{-k}f(\theta_i)\big)}
  {\sum_{i=1}^n\exp\big(-\rho^{-k}f(\theta_i)\big)},~~~
  \theta_i\sim\mathcal{N}(x_k,\rho^{k}\lambda^{-1}I_d),
\end{equation}
and the stable APP methods are characterized by the iteration
\begin{equation}\label{APP:eq:SRAI}
  x_{k+1}=\frac{\sum_{i=1}^n\theta_i \exp[-m_k^{-1}(f(\theta_i)-f_*)]}{\sum_{i=1}^n \exp[-m_k^{-1}(f(\theta_i)-f_*)]},~~~
  \theta_i\sim\mathcal{N}(x_k,\rho^{k}\lambda^{-1}I_d),
\end{equation}
where $I_d\in\mathbb{R}^{d\times d}$ is an identity matrix and $m_k^2=\mathbb{E}[(f(\theta_i)-f_*)^2]$; in practice, $f_*$ and $m_k$ can be replaced with corresponding estimates. More importantly, our main results show that, when the corresponding parameters are properly selected, both the original methods (Theorems \ref{APP:thm:main2} and \ref{APP:thm:main3}) and stable methods (together with Theorem \ref{APP:thm:mk}) enjoy global linear convergence, i.e., $\|x_k-x_*\|_2^2=\mathcal{O}(\rho^k)$, for finding the global minimizer $x_*$ under Assumption \ref{APP:ass:A}.

\subsection{Related Work}

We discuss the relationships between the new method and two closely related ideas, which are the \emph{proximal point methods} and the \emph{asymptotic solution of minimization problems}. Then, we comment some popular methods for finding a first-order critical point or second-order stationary point in a nonconvex setting, including \emph{derivative-based descents}, \emph{perturbed gradient descents} and \emph{derivative-free descents}.

\textbf{Proximal point methods.} The proximal point method (e.g., \cite{PoliakB1987M_Optimization}), which could be traced back to Martinet \cite{MartinetB1970_Proximal} in the context of convex minimization and Rockafellar \cite{RockafellarR1976A_Proximal} in the general setting of maximal monotone operators, is a conceptually simple approach for minimizing a function $f$ on $\mathbb{R}^d$. Given an iterate $x_k$, the method defines $x_{k+1}$ to be any minimizer of the proximal point iteration
\begin{equation}\label{APP:eq:PI}
  \arg\min_{x\in\mathbb{R}^d}
  \Big(f(x)+\frac{\lambda}{2}\|x-x_k\|_2^2\Big)
  ~~\textrm{for an appropriate}~\lambda>0,
\end{equation}
which could be seen as minimizing $\Psi(x_k):=\min_{x}(f(x)+ \frac{\lambda}{2}\|x-x_k\|_2^2)$ by applying the gradient descent with the stepsize $\frac{1}{\lambda}$ \cite{PoliakB1987M_Optimization}. Most of the time, the term proximal refers to the presence of the regularization term with a large $\lambda$, which encourages the new iterate to be close to $x_k$ \cite{BottouL2018R_SGD,ParpasP2017A_ProximalGradient}. However, a feature of our APP methods is that the regularization parameter $\lambda$ should be small enough to ensure a large search range.

The proximal point method is not only the basis of many techniques for convex optimization, but also has a relatively long history of solving nonconvex problems. Obviously, proximal regularization performed with an appropriate parameter ensures convexity of the auxiliary problems for some nonconvex cases \cite{KaplanA1998_PPNO}. More generally, as pointed out by Hare and Sagastiz\'{a}bal \cite{Hare2009_PPNO}, the inexact proximal point methods for convex optimization could be redesigned to deal with nonconvex functions and a crucial step is to design efficient methods of approximating nonconvex proximal points. From this point, corresponding methods have been proposed to approximate the proximal points of nonconvex functions \cite{HareW2010_PBNO,Hare2009_PPNO}, as well as nonsmooth convex functions \cite{Hare2018_PPSubgradient}. Similarly, nonconvex proximal points play an important role in our APP methods. We constructed a sequence of approximate nonconvex proximal points that converges linearly to the global minima of the function satisfied Assumption \ref{APP:ass:A}. However, we are based on an asymptotic representation, not an iterative approach. 

\textbf{Asymptotic solution of minimization problems.} In 1967, Pincus proved that \cite{PincusM1968A_AsymptoticSolution}, if $\Omega\subset \mathbb{R}^d$ is a bounded domain, $f$ is a continuous function on $\Omega$ and has a unique minimizer $s_*$ over $\Omega$, then the minimizer can be represented as
\begin{equation}\label{APP:eq:Pincus}
  s_*=\lim_{\alpha\to\infty}\frac{\int_\Omega x\exp
  \big(-\alpha f(x)\big)\ud x}
  {\int_\Omega \exp\big(-\alpha f(x)\big)\ud x}.
\end{equation}
And later, Pincus \cite{PincusM1970A_AsymptoticSolution} further suggested a Monte Carlo estimate to approximate the minimizer $s_*$. This idea did not receive enough attention because it is not sufficiently efficient \cite{TornA1989M_GlobalOptimization}. The major reason is that, for building such an estimate, one has to keep sampling uniformly on the entire domain $\Omega$, however, when $\alpha$ goes large, the main contributors in these samples are only those sufficiently close to the minimizer. But even so, its extension \cite{ZidaniH2016_GOformula} still obtained valuable applications for global optimization on infinite dimensional Hilbert spaces with finite measures.

In our work, we extend this asymptotic formula \eqref{APP:eq:Pincus} from a different perspective. We consider a similar approach to asymptotically represent the proximal points of nonconvex functions. On the one hand, this nonconvex representation prevents the APP methods from being trapped in saddle points, local minima, or discontinuities. On the other hand, the proximal regularization leads to a normal sampling distribution so that the corresponding samples will gather in the vicinity of the global minimizer as $\alpha$ increases. In addition, this idea is also similar to the viewpoint on stochastic gradient method in \cite{PasupathyR2018_OptimMC}: the increment of each iteration is regarded as an estimate related to Monte Carlo method. Although the estimates we use are quite different, we all achieve a linear convergence by reducing the variance. However, Pasupathy et al \cite{PasupathyR2018_OptimMC} uses a gradually increasing sequence of sample sizes, while we could reduce the variance with a fixed sample size, due to the variance contraction of our proximal point estimates (see Lemma \ref{APP:lem:EV} for details).

\textbf{Derivative-based descent methods.} For convenience we call an $\epsilon$-approximate first-order critical point \emph{$\epsilon$-solution}. It is known that the gradient method could find an $\epsilon$-solution in $\mathcal{O}(\epsilon^{-2})$ iterations for every function $f$ with Lipschitz continuous gradients \cite{NesterovY2018M_ConvexOptimization}. Further, if $f$ additionally has Lipschitz continuous Hessian, then the accelerated gradient method \cite{CarmonY2018A_NonConvex7o4} can achieve the complexity $\mathcal{O}(\epsilon^{-7/4}\log \frac{1}{\epsilon})$; by using Hessians, the cubic regularization of Newton method \cite{NesterovY2006A_CRNewton3o2,CartisC2010A_RegularizedDescent} can find an $\epsilon$-solution in $\mathcal{O}(\epsilon^{-3/2})$ iterations. More generally, the $p$-order regularization methods \cite{BirginE2017A_pOrderRegularized} could find an $\epsilon$-solution in $\mathcal{O}(\epsilon^{-(p+1)/p})$ iterations for every $f$ with Lipschitz continuous derivatives up to order $p\geqslant1$, and this complexity cannot be further improved \cite{CarmonY2019A_LowerBoundsA}. Furthermore, the first-order methods could not achieve the complexity $\mathcal{O}(\epsilon^{-8/5})$ for arbitrarily smooth functions \cite{CarmonY2019A_LowerBoundsB}. These excellent results show that, without additional assumptions, even finding a first-order critical point in nonconvex settings is relatively difficult. And this is one of the reasons that we introduced Assumption \ref{APP:ass:A}.

\textbf{Perturbed gradient descent methods.} In nonconvex settings, convergence to first-order critical points is not yet satisfactory. In 1988, Pemantle \cite{PemantleR1990A_SaddlePoints} realized that by adding zero-mean noise perturbations, a gradient descent method can circumvent strict saddle points with probability one. More recently, it is shown that for all twice differentiable strict saddle functions, the perturbed gradient method converges to an $\epsilon$-second-order stationary point with high probability in $\mathcal{O}(\epsilon^{-2})$ iterations \cite{JinC2017A_EscapeSaddlePoints}. Even without adding noise perturbations, gradient descent with random initialization \cite{LeeJ2016A_SaddlePoints} can also avoid strict saddle points with probability one. The trust region techniques \cite{SunJ2017A_TrustRegion1,SunJ2017A_TrustRegion2} can also avoid saddle points and are not limited to strict saddle points, but generally, the perturbed gradient method is more efficient in practice. And similarly, each APP iteration involves a normal sampling distribution which leads to a global asymptotic property, so it will not be trapped at any saddle point, or even local minima and discontinuities. 

\textbf{Derivative-free descent methods.} The derivative-free descent methods (e.g., \cite{ConnA2009M_DerivativeFree,PoliakB1987M_Optimization}), which are also known as zero-order methods \cite{DuchiJ2015A_ZeroOrderCO,ShamirO2017A_ZeroOrderConvex} in the literature or bandit optimization in the machine learning literature \cite{HazanE2014A_BanditOptimization,ShamirO2017A_ZeroOrderConvex}, were among the first schemes suggested in the early days of the development of optimization theory \cite{MatyasJ1965A_RandomOptimization}. One of the most typical derivative-free methods is established by the finite-difference method \cite{NesterovY2017A_GF,PoliakB1987M_Optimization}, and its descent direction can be seen as an asymptotically unbiased estimate of the smoothed gradient  \cite{NemirovskiA1983M_optimization,NesterovY2017A_GF}. Hence, the finite-difference derivative-free descent (FD-DFD) method can be regarded as a smoothed extension of the gradient descent method. In nonconvex settings, the FD-DFD method can also find an $\epsilon$-solution in $\mathcal{O}(\epsilon^{-2})$ iterations for every function with Lipschitz continuous gradients \cite{NesterovY2017A_GF}. The FD-DFD method can obviously be used to solve nonsmooth problems, and it seems intuitive that a sufficiently large smoothing parameter may help the FD-DFD method to stride saddle points, discontinuities or local minima, but further research is needed.

\subsection{Paper Organization}

The remainder of the paper is organized as follows. In the next section, we establish an asymptotic representation formula of nonconvex proximal points. In Sect. \ref{APP:s3}, we propose original and stable APP methods, and provide insights into the behavior of these methods by establishing their convergence properties and complexity bounds. In Sect. \ref{APP:s4}, we demonstrate the benefits of the stable APP method by several numerical experiments and comparisons in various dimensions from $2$ to $500$. And finally, we draw some conclusions in Sect. \ref{APP:s5}.

\section{Asymptotic representation of nonconvex proximal points}
\label{APP:s2}

In this section, we focus on establishing an asymptotic representation formula for the following nonconvex proximal points
\begin{equation}\label{APP:eq:RMP}
  \arg\min_{x\in\mathbb{R}^d}\left(f(x)+\frac{\lambda}{2}\|x-p\|_2^2\right)
  ~~\textrm{for fixed}~p\in\mathbb{R}^d~\textrm{and}~\lambda>0.
\end{equation}
As mentioned above, Pincus \cite{PincusM1968A_AsymptoticSolution} established an asymptotic formula for the solution of $\min_{x\in\Omega}f(x)$, where $\Omega\subset\mathbb{R}^d$ is a bounded domain. Here we extend his formula to the proximal point methods which includes an additional regularization term.

Suppose that \eqref{APP:eq:RMP} has a unique global minimizer $s_*(p,\lambda)=s_*$ over $\mathbb{R}^d$. If $f$ is further bounded below by a scalar $f_{\inf}$, then both
\begin{equation*}
  \exp\Big[-\alpha\Big(f(x)+\frac{\lambda}{2}\|x-p\|_2^2\Big)\Big]
\end{equation*}
and
\begin{equation*}
  \|x-s_*\|_2^2\exp\Big[-\alpha\Big(f(x)+\frac{\lambda}{2}\|x-p\|_2^2\Big)\Big]
\end{equation*}
are integrable on $\mathbb{R}^d$ for any $\alpha>0$. And for the solution of the regularized problem, we have the following asymptotic formula which can be represented as the limit of the ratio of the two integrals:
\begin{theorem}[nonconvex proximal points]\label{APP:thm:Ax}
Suppose that the proximal point \eqref{APP:eq:RMP} is unique and denoted by $s_*$. If $f$ is bounded from below and continuous at $s_*$, then the proximal point can be represented as
\begin{equation}\label{APP:eq:Ax}
  s_*=\lim_{\alpha\to\infty}\!\frac{\int_{\mathbb{R}^d}x\exp\big[
  -\alpha\big(f(x)+\frac{\lambda}{2}\|x\!-\!p\|_2^2\big)\big]\ud x}
  {\int_{\mathbb{R}^d}\exp\big[-\alpha\big(f(x)+
  \frac{\lambda}{2}\|x\!-\!p\|_2^2\big)\big]\ud x}
  =\lim_{\alpha\to\infty}\!
  \frac{\mathbb{E}\big[\theta\exp\big(-\alpha f(\theta)\big)\big]}
  {\mathbb{E}\big[\exp\big(-\alpha f(\theta)\big)\big]},
\end{equation}
where $\theta\sim\mathcal{N}(p,\alpha^{-1}\lambda^{-1}I_d)$ and $I_d\in\mathbb{R}^{d\times d}$ is an identity matrix.
\end{theorem}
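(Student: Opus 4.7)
The plan is to reduce Theorem~\ref{APP:thm:Ax} to a Laplace-type concentration argument for the measure proportional to $\exp(-\alpha g)$, where $g(x) := f(x) + \frac{\lambda}{2}\|x-p\|_2^2$. First I would dispatch the second equality in \eqref{APP:eq:Ax}: writing out the density of $\theta \sim \mathcal{N}(p, \alpha^{-1}\lambda^{-1} I_d)$ yields
$$\mathbb{E}\bigl[\phi(\theta) e^{-\alpha f(\theta)}\bigr] = \Bigl(\frac{\alpha\lambda}{2\pi}\Bigr)^{d/2} \int_{\mathbb{R}^d} \phi(x) e^{-\alpha g(x)}\,\ud x$$
for any measurable $\phi$ for which the integrals are defined, and the prefactor cancels between numerator and denominator. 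Setting $g_* := g(s_*)$, $h := g - g_* \geq 0$, and pulling $e^{-\alpha g_*}$ out of both integrals, the theorem reduces to showing
$$R(\alpha) := \frac{\int_{\mathbb{R}^d}(x - s_*) e^{-\alpha h(x)}\,\ud x}{\int_{\mathbb{R}^d} e^{-\alpha h(x)}\,\ud x} \longrightarrow 0 \quad \text{as } \alpha \to \infty.$$

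For the concentration step, fix $\delta > 0$ and split $\mathbb{R}^d = B_\delta(s_*) \cup B_\delta(s_*)^c$. The contribution of $B_\delta$ to the numerator has norm at most $\delta$ times the full denominator, giving a crude bound $\|R(\alpha)\|_2 \leq \delta + T(\alpha)$, where $T(\alpha)$ is the ratio of the $B_\delta^c$-numerator to the full denominator. To handle $T(\alpha)$, let $\eta_\delta := \inf_{B_\delta^c} h > 0$ (the delicate point, discussed below); writing $e^{-\alpha h} \leq e^{-(\alpha-1)\eta_\delta} e^{-h}$ on $B_\delta^c$ and using the Gaussian-type lower bound $h(x) \geq \frac{\lambda}{2}\|x-p\|_2^2 + f_{\inf} - g_*$ inherited from boundedness of $f$, the tail numerator is at most $e^{-(\alpha-1)\eta_\delta} \cdot M$ with $M := \int_{\mathbb{R}^d}\|x-s_*\|_2\, e^{-h(x)}\,\ud x < \infty$. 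For the denominator I would use continuity of $f$ at $s_*$: for any $\varepsilon > 0$ there is a ball $B_{r_\varepsilon}(s_*)$ on which $h < \varepsilon$, whence $\int e^{-\alpha h}\,\ud x \geq e^{-\alpha\varepsilon}\,\mathrm{vol}(B_{r_\varepsilon})$. Choosing $\varepsilon < \eta_\delta/2$ forces $T(\alpha) \to 0$ exponentially, giving $\limsup_{\alpha \to \infty}\|R(\alpha)\|_2 \leq \delta$, and arbitrariness of $\delta$ closes the argument.

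The main obstacle is establishing $\eta_\delta > 0$ for every $\delta > 0$, since nothing in the hypotheses controls $f$ away from $s_*$ beyond boundedness and uniqueness of the proximal minimizer. I would argue by coercivity: the quadratic regularization forces $g \to \infty$ at infinity, so if $\eta_\delta = 0$ a minimizing sequence $(x_n) \subset B_\delta^c$ of $h$ is bounded and has a limit point $y$ with $\|y - s_*\|_2 \geq \delta$; concluding $h(y) = 0$ and thereby contradicting uniqueness of $s_*$ requires lower semicontinuity of $f$ at $y$. So this step implicitly relies on either interpreting ``uniqueness of the proximal point'' in the sharp sense $\inf_{\|x-s_*\|_2 \geq \delta} g(x) > g_*$, or invoking lower-semicontinuity of $f$ as a natural standing assumption; once this compactness input is granted, the remainder of the proof is routine Laplace-method bookkeeping.
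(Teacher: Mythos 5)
Your proposal is correct in outline and follows the same Laplace-concentration strategy as the paper, but with a complementary decomposition: you split $\mathbb{R}^d$ into the ball $B_\delta(s_*)$ and its complement, whereas the paper splits by level sets of $\tau(x)=\exp[-g(x)]$, taking $\Omega_\delta=\{x:\tau(x)>\tau(s_*)-\delta\}$ and its complement. The two choices push the difficulty to opposite sides. With your ball decomposition the near region is spatially small by construction and the burden is the value gap $\eta_\delta=\inf_{B_\delta^c}h>0$ on the far region; with the paper's level-set decomposition the value gap on the far region is automatic (there $\tau\leqslant\tau(s_*)-\delta$, so $(\tau(\xi)/\tau(\zeta))^\alpha\to0$), and the burden becomes showing that $\Omega_{\delta(\epsilon)}$ is contained in a small ball around $s_*$. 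These are the same assertion in disguise, and you are right that it does not follow from the stated hypotheses (boundedness below, continuity at $s_*$, uniqueness of the minimizer) without either lower semicontinuity of $f$ or reading ``uniqueness'' in the sharp sense $\inf_{\|x-s_*\|_2\geqslant\delta}g(x)>g(s_*)$. The paper simply asserts ``$\|x-s_*\|_2^2<\epsilon/2$ for all $x\in\Omega_{\delta(\epsilon)}$'' as a consequence of continuity at $s_*$, which it is not; so the delicate point you isolate is present, unacknowledged, in the paper's own proof. Your far-region estimate via $e^{-\alpha h}\leqslant e^{-(\alpha-1)\eta_\delta}e^{-h}$ combined with the Gaussian envelope $h(x)\geqslant\frac{\lambda}{2}\|x-p\|_2^2+f_{\inf}-g_*$ is also somewhat cleaner than the paper's appeal to the mean value theorem for integrals on the possibly unbounded, disconnected set $\Omega'_{\delta(\epsilon)}$ with a possibly discontinuous integrand. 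In short: same method, mirrored decomposition, and your explicit identification of the compactness/semicontinuity input is a genuine improvement in rigor, provided the theorem's hypotheses are read (or amended) so as to supply it.
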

\begin{remark}
The representation establised by Pincus \cite{PincusM1968A_AsymptoticSolution} was restricted to a bounded domain $\Omega$ in order to ensure that these two integrals are integrable. Since including a regularization term, two integrals in \eqref{APP:eq:Ax} are integrable on $\mathbb{R}^d$ for any $\alpha>0$ if $f$ is bounded from below.
\end{remark}
\begin{proof}
For convenience we define $\tau(x)=\exp\big[-\big(f(x) +\frac{\lambda}{2}\|x-p\|_2^2\big)\big]$ and
\begin{equation*}
  m^{(\alpha)}(x)=\frac{\tau^\alpha(x)}{\int_{\mathbb{R}^d} \tau^\alpha(x)\ud x}.
\end{equation*}
Clearly, $m^{(\alpha)}(x)>0$ for all $x\in\mathbb{R}^d$ and $\int_{\mathbb{R}^d}m^{(\alpha)}(x)\ud x=1$, then since $\|\cdot\|_2^2$ is convex, Jensen's inequality gives
\begin{equation}\label{APP:eq:AxT0}
  \left\|\int_{\mathbb{R}^d}x~m^{(\alpha)}(x)\ud x-s_*\right\|_2^2
  \!=\left\|\int_{\mathbb{R}^d}(x-s_*)m^{(\alpha)}(x)\ud x\right\|_2^2
  \!\leqslant\!\int_{\mathbb{R}^d}\!\|x-s_*\|_2^2m^{(\alpha)}(x)\ud x.
\end{equation}
We decompose $\mathbb{R}^d$ into two domains to establish an upper bound for the integral on the right-hand side of the last inequality. For all $\delta>0$ we define the open domain
\begin{equation*}
  \Omega_\delta=\{x\in\mathbb{R}^d:\tau(x)>\tau(s_*)-\delta\}
  ~~\textrm{with its complement}~~
  \Omega'_\delta=\mathbb{R}^d-\Omega_\delta.
\end{equation*}
Since $f$ is continuous at $s_*$, we observe that, for small $\epsilon>0$, there exists ${\delta(\epsilon)}>0$ such that $\mu(\Omega_{\delta(\epsilon)})>0$ and $\|x-s_*\|_2^2<\frac{\epsilon}{2}$ for all $x\in\Omega_{\delta(\epsilon)}$, where $\mu(S)$ is the $d$-dimensional Lebesgue measure of a subset $S\subset\mathbb{R}^d$; and we have
\begin{equation}\label{APP:eq:AxT1}
  \int_{\mathbb{R}^d}\|x-s_*\|_2^2m^{(\alpha)}\!(x)\ud x
  =\int_{\Omega_{\delta(\epsilon)}}\!\!\|x-s_*\|_2^2
  m^{(\alpha)}\!(x)\ud x+\int_{\Omega'_{\delta(\epsilon)}}
  \!\!\|x-s_*\|_2^2m^{(\alpha)}\!(x)\ud x.
\end{equation}
For the first integral on the right-hand side of \eqref{APP:eq:AxT1}, we clearly have
\begin{equation}\label{APP:eq:AxT2}
  \int_{\Omega_{\delta(\epsilon)}}\|x-s_*\|_2^2m^{(\alpha)}(x)\ud x
  <\frac{\epsilon}{2}\int_{\Omega_{\delta(\epsilon)}}m^{(\alpha)}(x)
  \ud x<\frac{\epsilon}{2}\int_{\mathbb{R}^d}m^{(\alpha)}(x)\ud x
  =\frac{\epsilon}{2}.
\end{equation}
For the second integral on the right-hand side of \eqref{APP:eq:AxT1}, we obtain
\begin{align*}
  \int_{\Omega'_{\delta(\epsilon)}}\!\|x-s_*\|_2^2m^{(\alpha)}(x)\ud x
  =\frac{\int_{\Omega'_{\delta(\epsilon)}}\!
  \|x\!-\!s_*\|_2^2\tau^\alpha(x)\ud x}
  {\int_{\mathbb{R}^d}\tau^\alpha(x)\ud x}
  <\frac{\int_{\Omega'_{\delta(\epsilon)}}\!
  \|x\!-\!s_*\|_2^2\tau^\alpha(x)\ud x}
  {\int_{\Omega_{\delta(\epsilon)}}\tau^\alpha(x)\ud x}.
\end{align*}
When $\alpha\lambda/2>1$, $\tau^\alpha(x)\exp(\|x-p\|_2^2)$ and $\|x-s_*\|_2^2 \exp(-\|x-p\|_2^2)$ are integrable on $\mathbb{R}^d$, hence, by the mean value theorem for integrals, there is $\xi\in\Omega'_{\delta(\epsilon)}$ such that
\begin{align*}
  \int_{\Omega'_{\delta(\epsilon)}}\!\!\|x-s_*\|_2^2\tau^\alpha(x)\ud x
  =&\tau^\alpha(\xi)\exp(\|\xi-p\|_2^2)
  \int_{\Omega'_{\delta(\epsilon)}}\!\!\|x-s_*\|_2^2
  \exp(-\|x-p\|_2^2)\ud x \\
  <&\tau^\alpha(\xi)\exp(\|\xi-p\|_2^2)
  \int_{\mathbb{R}^d}\|x-s_*\|_2^2\exp(-\|x-p\|_2^2) \ud x;
\end{align*}
and similarly, there is $\zeta\in\Omega_{\delta(\epsilon)}$ such that
\begin{equation*}
  \int_{\Omega_{\delta(\epsilon)}}\tau^\alpha(x)\ud x
  =\tau^\alpha(\zeta)\mu(\Omega_{\delta(\epsilon)}).
\end{equation*}
Thus, we obtain
\begin{equation*}
  \int_{\Omega'_{\delta(\epsilon)}}\!\|x-s_*\|_2^2m^{(\alpha)}(x)\ud x
  <\left(\frac{\tau(\xi)}{\tau(\zeta)}\right)^\alpha
  \frac{\exp(\|\xi-p\|_2^2)I_{s_*,p}}
  {\mu(\Omega_{\delta(\epsilon)})},
\end{equation*}
where $\tau(\xi)<\tau(\zeta)$ and $I_{s_*,p}=\int_{\mathbb{R}^d} \|x-s_*\|_2^2\exp(-\|x-p\|_2^2)\ud x<\infty$. Therefore, there exists a fixed $\alpha_\epsilon>0$ such that for every $\alpha>\alpha_\epsilon$, it holds that
\begin{equation}\label{APP:eq:AxT3}
  \int_{\Omega'_{\delta(\epsilon)}}\|x-s_*\|_2^2m^{(\alpha)}(x)\ud x
  <\frac{\epsilon}{2}.
\end{equation}
Finally, from \eqref{APP:eq:AxT0} - \eqref{APP:eq:AxT3}, we observe that for small $\epsilon>0$, there is a fixed $\alpha_\epsilon>0$ such that for all $\alpha>\alpha_\epsilon$, it holds that
\begin{equation*}
  \left\|\int_{\mathbb{R}^d}x\cdot m^{(\alpha)}(x)
  \ud x-s_*\right\|_2^2<\epsilon,
\end{equation*}
and the proof is complete.\qed
\end{proof}

\section{APP methods}
\label{APP:s3}

In this section, we first propose original APP methods and provide insights into the behavior of the methods by establishing its convergence property and complexity bound; then we also propose stable APP methods, and further show that the analyses of the original APP methods can be directly applied to the stable APP methods.

\subsection{Original idea}

Inspired by the explicit asymptotic formula built in the previous section, our APP methods are procedures in which each iterate is chosen as a weighted average of normally distributed samples with mean equal to the latest iterate. Specifically, with an initial point $x_1$, a fixed regularization parameter $\lambda>0$, a fixed contraction factor $0<\rho<1$ and $n\in\mathbb{N}$, the original APP methods are characterized by the iteration
\begin{equation}\label{APP:eq:RAIxk}
  x_{k+1}=\frac{\sum_{i=1}^n\theta_i
  \exp\big(-\rho^{-k}f(\theta_i)\big)}
  {\sum_{i=1}^n\exp\big(-\rho^{-k}f(\theta_i)\big)},
  ~~\textrm{where}~~\theta_i\sim\mathcal{N}
  (x_k,\rho^{k}\lambda^{-1}I_d).
\end{equation}
Here, $I_d\in\mathbb{R}^{d\times d}$ is an identity matrix so that $\mathcal{N}(x_k,\rho^k\lambda^{-1}I_d)$ is a spherical normal distribution. The iteration \eqref{APP:eq:RAIxk} is derivative-free and can be applied to nonsmooth or even discontinuous problems. Specifically, we can define the original APP method as Algorithm \ref{APP:alg:APP}. The regularization parameter $\lambda$ determines the initial exploration radius and $n$ determines the number of function evaluations per-iteration.

\begin{algorithm}
\caption{Original APP Method}
\label{APP:alg:APP}
\begin{algorithmic}[1]
\STATE{Choose an initial iterate $x_1$ and preset parameters $\lambda>0$, $\rho\in(0,1)$, $n\in\mathbb{N}$.}
\FOR{$k=1,2,\cdots$}
\STATE{Generate $n$ realizations $\{\theta_i\}_i^n$ of the random vector from $\mathcal{N}(x_k,\rho^{k}\lambda^{-1}I_d)$.}
\STATE{Compute function value sequence $\{f(\theta_i)\}_i^n$.}
\STATE{Set the new iterate as $x_{k+1}=\frac{\sum_{i=1}^n\theta_i \exp(-\rho^{-k}f(\theta_i))}{\sum_{i=1}^n \exp(-\rho^{-k}f(\theta_i))}$.}
\ENDFOR
\end{algorithmic}
\end{algorithm}

To establish this iteration, we replaced the two integrals of the ratio in \eqref{APP:eq:Ax} with Monte Carlo estimates, that is,
\begin{equation*}
  \frac{1}{n}\!\sum_{i=1}^{n}\theta_i~
  \exp\big(\!-\rho^{-k}\!f(\theta_i)\big)\approx
  \mathbb{E}\big[\theta\exp\big(-\rho^{-k}f(\theta)\big)\big]
\end{equation*}
and
\begin{equation*}
  \frac{1}{n}\!\sum_{i=1}^{n}
  \exp\big(\!-\rho^{-k}\!f(\theta_i)\big)\approx
  \mathbb{E}\big[\exp\big(-\rho^{-k}f(\theta)\big)\big],
\end{equation*}
where $\theta\sim\mathcal{N}(x_k,\rho^k\lambda^{-1}I_d)$. Hence, $x_{k+1}$ is a sample-based estimate of the exact asymptotic proximal point iteration
\begin{equation*}
  e_{k+1}:=\frac{\mathbb{E}\big[\theta\exp\big(-\rho^{-k}f(\theta)\big)\big]}
  {\mathbb{E}\big[\exp\big(-\rho^{-k}f(\theta)\big)\big]}
  =\frac{\int_{\mathbb{R}^d}x~\exp\big[-\rho^{-k}
  \big(f(x)+\frac{\lambda}{2}\|x-x_k\|_2^2\big)\big]\ud x}
  {\int_{\mathbb{R}^d}\exp\big[-\rho^{-k}\big(f(x)+
  \frac{\lambda}{2}\|x-x_k\|_2^2\big)\big]\ud x}.
\end{equation*}
It is worth pointing out that, although containing the idea of Monte Carlo methods, our method is not directly dependent on its slow convergence rate. Thus, our iteration method described as Algorithm \ref{APP:alg:APP} is much different from the simple Monte Carlo estimator suggested by Pincus \cite{PincusM1970A_AsymptoticSolution}.

As we mentioned above, the APP method enjoys the global linear convergence. Usually, a typical linear convergence can be described as
\begin{equation*}
  \|x_{k+1}-x_*\|_2^2\leqslant\rho\|x_k-x_*\|_2^2
  ~~~\textrm{for a certain}~~~0<\rho<1,
\end{equation*}
which represents a contraction relationship between $\|x_{k+1}-x_*\|_2^2$ and $\|x_k-x_*\|_2^2$ for all $k\in\mathbb{N}$. In the following, we will introduce a critical medium $U_k$ to establish such a contraction relationship. For convenience we first define
\begin{equation}\label{APP:eq:phipsi}
  \phi_k(\theta)=\|\theta-x_*\|_2\exp(-\rho^{-k}f(\theta)) ~~~\textrm{and}~~~\psi_k(\theta)=\exp(-\rho^{-k}f(\theta))
\end{equation}
with $\theta\sim\mathcal{N}(x_k,\rho^k\lambda^{-1}I_d)$, then the medium $U_k$ can be define as
\begin{equation}\label{APP:eq:Ik}
  U_k:=\frac{\mathbb{E}[\phi_k]}{\mathbb{E}[\psi_k]}
  =\frac{\int_\Theta\phi_k(\theta)\ud P_\Theta(\theta)}
  {\int_\Theta\psi_k(\theta)\ud P_\Theta(\theta)};
\end{equation}
and equivalently, \eqref{APP:eq:Ik} can also be rewritten as
\begin{equation}\label{APP:eq:IkR}
  U_k=\frac{\int_{\mathbb{R}^d}\|x-x_*\|_2
  \exp\big[-\rho^{-k}\big(f(x)+\frac{\lambda}{2}
  \|x-x_k\|_2^2\big)\big]\ud x}
  {\int_{\mathbb{R}^d}\exp\big[-\rho^{-k}\big(f(x)+
  \frac{\lambda}{2}\|x-x_k\|_2^2\big)\big]\ud x}.
\end{equation}
Since $\|\cdot\|_2$ is convex, Jensen's inequality gives
\begin{equation}\label{APP:eq:IkRB}
  \|e_{k+1}-x_*\|_2=\left\|\frac{\int_{\mathbb{R}^d}x~\exp\big[-\rho^{-k}
  \big(f(x)+\frac{\lambda}{2}\|x-x_k\|_2^2\big)\big]\ud x}
  {\int_{\mathbb{R}^d}\exp\big[-\rho^{-k}\big(f(x)+
  \frac{\lambda}{2}\|x-x_k\|_2^2\big)\big]\ud x}-x_*\right\|_2
  \leqslant U_k.
\end{equation}
Hence, we will first show that the difference between $\|x_{k+1}-x_*\|_2$ and $\|e_{k+1}-x_*\|_2$ can be controlled by the sample size $n$, and then establish the relationship between $\|e_{k+1}-x_*\|_2$ and $\|x_k-x_*\|_2$.

\subsection{Preliminary}

Before we start, we pause to introduce two $d$-dimensional integrals that occur many times in subsequent analysis. We will see that these results are fundamental to the analyses of APP algorithms. For a proof, see Appendix A.

\begin{lemma}\label{APP:lem:2int}
For any $\alpha,\beta,\gamma\in\mathbb{R}$ and $u,v\in\mathbb{R}^d$, if $\alpha(\beta+\gamma)>0$ and
\begin{equation*}
  \varphi(x)=\exp\left[-\alpha
  \left(\frac{\beta}{2}\|x-u\|_2^2+
  \frac{\gamma}{2}\|x-v\|_2^2\right)\right],
\end{equation*}
then the integrals
\begin{equation*}
  \int_{\mathbb{R}^d}\varphi(x)\ud x=\exp\left[
  -\frac{\alpha\beta\gamma\|u-v\|_2^2}{2(\beta+\gamma)}\right]
  \left[\frac{2\pi}{\alpha(\beta+\gamma)}\right]^{\frac{d}{2}}
\end{equation*}
and
\begin{equation*}
  \int_{\mathbb{R}^d}\!\|x-u\|_2^2\varphi(x)\ud x
  =\exp\left[-\frac{\alpha\beta\gamma\|u\!-\!v\|_2^2}
  {2(\beta+\gamma)}\right]\!\left[\frac{2\pi}
  {\alpha(\beta\!+\!\gamma)}\right]^{\frac{d}{2}}
  \!\!\left[\frac{d}{\alpha(\beta\!+\!\gamma)}
  +\frac{\gamma^2\|u\!-\!v\|_2^2}{(\beta+\gamma)^2}\right].
\end{equation*}
\end{lemma}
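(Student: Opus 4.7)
The plan is to reduce both integrals to the standard zeroth and second moments of a spherical Gaussian by completing the square in the exponent. The whole computation is algebraic bookkeeping: the hypothesis $\alpha(\beta+\gamma)>0$ is what ensures the resulting Gaussian is integrable, so once the consolidation is done the rest is mechanical.

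First I would verify the identity
\begin{equation*}
  \frac{\beta}{2}\|x-u\|_2^2+\frac{\gamma}{2}\|x-v\|_2^2
  =\frac{\beta+\gamma}{2}\|x-w\|_2^2
  +\frac{\beta\gamma}{2(\beta+\gamma)}\|u-v\|_2^2,
\end{equation*}
where $w:=(\beta u+\gamma v)/(\beta+\gamma)$. This is just expansion and matching of the cross-terms, and it is well-defined because $\beta+\gamma\neq 0$ (otherwise $\alpha(\beta+\gamma)>0$ would fail). Substituting this into the definition of $\varphi$ yields
\begin{equation*}
  \varphi(x)=\exp\!\left[-\frac{\alpha\beta\gamma}{2(\beta+\gamma)}\|u-v\|_2^2\right]\exp\!\left[-\frac{\alpha(\beta+\gamma)}{2}\|x-w\|_2^2\right].
\end{equation*}

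The first integral then follows immediately from the standard separable Gaussian identity
\begin{equation*}
  \int_{\mathbb{R}^d}\exp\!\left[-\frac{\alpha(\beta+\gamma)}{2}\|x-w\|_2^2\right]\ud x
  =\left[\frac{2\pi}{\alpha(\beta+\gamma)}\right]^{d/2},
\end{equation*}
which is where the hypothesis $\alpha(\beta+\gamma)>0$ gets used.

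For the second integral I would expand
\begin{equation*}
  \|x-u\|_2^2=\|x-w\|_2^2+2(x-w)^\top(w-u)+\|w-u\|_2^2
\end{equation*}
and integrate termwise against the centered Gaussian on $x-w$. The linear term vanishes by symmetry; the constant term contributes $\|w-u\|_2^2$ times the first integral, and a direct calculation gives $w-u=\gamma(v-u)/(\beta+\gamma)$, so $\|w-u\|_2^2=\gamma^2\|u-v\|_2^2/(\beta+\gamma)^2$. For the quadratic term I would invoke the coordinate-wise second moment of a spherical Gaussian, namely
\begin{equation*}
  \int_{\mathbb{R}^d}\|x-w\|_2^2\exp\!\left[-\frac{\alpha(\beta+\gamma)}{2}\|x-w\|_2^2\right]\ud x
  =\frac{d}{\alpha(\beta+\gamma)}\left[\frac{2\pi}{\alpha(\beta+\gamma)}\right]^{d/2}.
\end{equation*}
Combining the two contributions and factoring out $\exp[-\alpha\beta\gamma\|u-v\|_2^2/(2(\beta+\gamma))][2\pi/(\alpha(\beta+\gamma))]^{d/2}$ produces the stated bracketed factor $d/[\alpha(\beta+\gamma)]+\gamma^2\|u-v\|_2^2/(\beta+\gamma)^2$.

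No serious obstacle is expected: the only thing to be careful about is that the sign hypothesis is imposed on the product $\alpha(\beta+\gamma)$ rather than on the individual coefficients, so $\beta$ or $\gamma$ alone may be negative. Once the square is completed, only the combined exponent governs integrability, so the argument goes through without any positivity assumption on $\beta$ or $\gamma$ separately.
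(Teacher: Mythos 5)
Your proposal is correct and follows essentially the same route as the paper: complete the square to extract the factor $\exp[-\alpha\beta\gamma\|u-v\|_2^2/(2(\beta+\gamma))]$ and reduce everything to the zeroth and second moments of a centered Gaussian, with $\alpha(\beta+\gamma)>0$ guaranteeing integrability. The only (cosmetic) difference is that the paper carries out the completion of the square coordinate-by-coordinate and then takes products and sums of the one-dimensional integrals, whereas you work vectorially in $\mathbb{R}^d$ directly.
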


\subsection{Three fundamental lemmas}

The following lemma first establishes an upper bound for $\|x_{k+1}-x_*\|_2^2$.
\begin{lemma}[Relationship between $\|x_{k+1}-x_*\|_2^2$ and $U_k^2$]
\label{APP:lem:N2I}
Suppose that the APP method (Algorithm \ref{APP:alg:APP}) is run with a natural number $n$ such that, for all $k\in\mathbb{N}$ and $C>0$, the number of function evaluations per-iteration
\begin{equation}\label{APP:eq:nk1}
  n\geqslant n_k:=\frac{4C^2\mathbb{V}[\psi_k]}{(\mathbb{E}[\psi_k])^2}.
\end{equation}
Then with probability at least $1-\frac{1}{C^2}$, the iterates of APP satisfy for all $k\in\mathbb{N}$:
\begin{equation*}
  \|x_{k+1}-x_*\|_2^2\leqslant8U_k^2+8\frac{C^2\mathbb{V}[\phi_k]}
  {(\mathbb{E}[\psi_k])^2}\frac{1}{n},
\end{equation*}
where $U_k$ is defined by \eqref{APP:eq:IkR}, $\phi_k$ and $\psi_k$ are defined by \eqref{APP:eq:phipsi}.
\end{lemma}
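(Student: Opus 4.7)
\textbf{Proof plan for Lemma \ref{APP:lem:N2I}.} The strategy is to reduce the iterate error to two Chebyshev-type concentration estimates, one for the numerator $\bar{\phi}_n := \frac{1}{n}\sum_{i=1}^n \phi_k(\theta_i)$ and one for the denominator $\bar{\psi}_n := \frac{1}{n}\sum_{i=1}^n \psi_k(\theta_i)$, so that the ratio $\bar{\phi}_n/\bar{\psi}_n$ can be compared to $U_k = \mathbb{E}[\phi_k]/\mathbb{E}[\psi_k]$. The starting observation is that the APP update rewrites as
\[
x_{k+1} - x_* = \frac{\sum_{i=1}^n (\theta_i - x_*)\psi_k(\theta_i)}{\sum_{i=1}^n \psi_k(\theta_i)},
\]
so that the triangle inequality gives $\|x_{k+1} - x_*\|_2 \leq \bar{\phi}_n/\bar{\psi}_n$. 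This is the structural analogue of \eqref{APP:eq:IkRB} at the sample level, and it reduces everything to a sampling-error analysis for a ratio of two empirical means.

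Next I would control the denominator via Chebyshev. Since $\bar{\psi}_n$ is the mean of $n$ i.i.d.\ copies of $\psi_k(\theta)$,
\[
\mathbb{P}\!\left(|\bar{\psi}_n - \mathbb{E}[\psi_k]| > \tfrac{1}{2}\mathbb{E}[\psi_k]\right) \leq \frac{4\mathbb{V}[\psi_k]}{n (\mathbb{E}[\psi_k])^2} \leq \frac{1}{C^2},
\]
where the last inequality is exactly the hypothesis $n \geq n_k$. On the good event $A$ one has $\bar{\psi}_n \geq \frac{1}{2}\mathbb{E}[\psi_k]$, so $(\bar{\phi}_n/\bar{\psi}_n)^2 \leq 4\bar{\phi}_n^2/(\mathbb{E}[\psi_k])^2$. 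Splitting $\bar{\phi}_n^2 \leq 2(\mathbb{E}[\phi_k])^2 + 2(\bar{\phi}_n - \mathbb{E}[\phi_k])^2$, recognizing $(\mathbb{E}[\phi_k])^2/(\mathbb{E}[\psi_k])^2 = U_k^2$, and applying Chebyshev once more to obtain $(\bar{\phi}_n - \mathbb{E}[\phi_k])^2 \leq C^2 \mathbb{V}[\phi_k]/n$ with probability $\geq 1 - 1/C^2$ produces, on the intersection of the two events,
\[
\|x_{k+1} - x_*\|_2^2 \leq 8 U_k^2 + \frac{8 C^2 \mathbb{V}[\phi_k]}{n(\mathbb{E}[\psi_k])^2},
\]
which is the claimed bound.

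The main obstacle I foresee is calibrating the probability budget: a naive union bound over the two Chebyshev events only yields probability $\geq 1 - 2/C^2$ rather than the stated $1 - 1/C^2$. To recover the tighter statement, I would work with the single zero-mean quantity $\bar{\phi}_n - U_k \bar{\psi}_n$, whose variance is $(\mathbb{V}[\phi_k] + U_k^2 \mathbb{V}[\psi_k] - 2 U_k\, \mathrm{Cov}(\phi_k,\psi_k))/n$ and which enters the identity $\bar{\phi}_n/\bar{\psi}_n - U_k = (\bar{\phi}_n - U_k \bar{\psi}_n)/\bar{\psi}_n$; jointly controlling it with the denominator event through a single rescaled Chebyshev threshold should restore the probability $1 - 1/C^2$ at the price of a (possibly larger) explicit constant in place of $8$. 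A cheaper alternative is to rebalance the two Chebyshev thresholds so that each event holds with probability $\geq 1 - 1/(2C^2)$, at the cost of a modest inflation of the constant inside $n_k$. In either case the structural form of the bound --- a linear combination of $U_k^2$ and $\mathbb{V}[\phi_k]/[n(\mathbb{E}[\psi_k])^2]$ --- is preserved, which is precisely what is needed to feed into the subsequent contraction argument.
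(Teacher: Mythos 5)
Your proposal is correct and follows essentially the same route as the paper's proof: bound $\|x_{k+1}-x_*\|_2$ by the empirical ratio $\bar{\phi}_n/\bar{\psi}_n$, apply Chebyshev separately to numerator and denominator (with the hypothesis $n\geqslant 4C^2\mathbb{V}[\psi_k]/(\mathbb{E}[\psi_k])^2$ serving exactly to keep the denominator above $\tfrac12\mathbb{E}[\psi_k]$), and combine via $(a+b)^2\leqslant 2a^2+2b^2$ to land on the constant $8$. The union-bound issue you flag is real but is present, unacknowledged, in the paper's own proof as well --- the paper also intersects two Chebyshev events and still asserts probability $1-\tfrac{1}{C^2}$ rather than $1-\tfrac{2}{C^2}$ --- so your proposed remedies (a single zero-mean combination, or rebalancing the thresholds at the cost of a constant) make your version, if anything, more careful than the original.
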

\begin{proof}
It follows from the iteration \eqref{APP:eq:RAIxk} that
\begin{align*}
  x_{k+1}-x_*=\frac{\sum_{i=1}^{n_k}(\theta_i-x_*)
  \exp\big(-\rho^{-k}f(\theta_i)\big)}
  {\sum_{i=1}^{n_k}\exp\big(-\rho^{-k}f(\theta_i)\big)},
\end{align*}
since $\|\cdot\|_2$ is convex, Jensen's inequality gives
\begin{align*}
  \|x_{k+1}-x_*\|_2=&\left\|\frac{\sum_{i=1}^{n_k}(\theta_i-x_*)
  \exp\big(-\rho^{-k}f(\theta_i)\big)}{\sum_{i=1}^{n_k}
  \exp\big(-\rho^{-k}f(\theta_i)\big)}\right\|_2 \\
  \leqslant&\frac{\sum_{i=1}^{n_k}\|\theta_i-x_*\|_2
  \exp\big(-\rho^{-k}f(\theta_i)\big)}{\sum_{i=1}^{n_k}
  \exp\big(-\rho^{-k}f(\theta_i)\big)},
\end{align*}
which can be rewritten as
\begin{equation}\label{APP:eq:mpp1}
  \|x_{k+1}-x_*\|_2\leqslant\frac{\bar{\phi}_k}{\bar{\psi}_k},
  ~~\textrm{where}~~\bar{\phi}_k=\frac{1}{n_k}
  \sum_{i=1}^{n_k}\phi_k(x_i),~~
  \bar{\psi}_k=\frac{1}{n_k}\sum_{i=1}^{n_k}\psi_k(x_i).
\end{equation}
Furthermore, notice that
\begin{equation*}
  \mathbb{E}[\bar{\phi}_k]=\mathbb{E}[\phi_k],~~~
  \mathbb{V}[\bar{\phi}_k]=\mathbb{V}[\phi_k]/n_k,
\end{equation*}
and
\begin{equation*}
  \mathbb{E}[\bar{\psi}_k]=\mathbb{E}[\psi_k],~~~
  \mathbb{V}[\bar{\psi}_k]=\mathbb{V}[\psi_k]/n_k,
\end{equation*}
it follows from Chebyshev's inequality that, for all $C>0$,
\begin{equation*}
  \mathbb{P}\left(|\bar{\phi}_k-\mathbb{E}[\phi_k]|\geqslant
  C\sqrt{\mathbb{V}[\phi_k]/n_k}\right)\leqslant\frac{1}{C^2}
\end{equation*}
and
\begin{equation*}
  \mathbb{P}\left(|\bar{\psi}_k-\mathbb{E}[\psi_k]|\geqslant
  C\sqrt{\mathbb{V}[\psi_k]/n_k}\right)\leqslant\frac{1}{C^2},
\end{equation*}
that is, with probability at least $1-\frac{1}{C^2}$, it hold that
\begin{equation*}
  |\bar{\phi}_k-\mathbb{E}[\phi_k]|\leqslant
  C\sqrt{\mathbb{V}[\phi_k]/n_k},~~~
  |\bar{\psi}_k-\mathbb{E}[\psi_k]|\leqslant
  C\sqrt{\mathbb{V}[\psi_k]/n_k},
\end{equation*}
and further,
\begin{align*}
  \frac{\bar{\phi}_k}{\bar{\psi}_k}\leqslant&
  \frac{\mathbb{E}[\phi_k]+C\sqrt{\mathbb{V}[\phi_k]/n_k}}
  {\mathbb{E}[\psi_k]-C\sqrt{\mathbb{V}[\psi_k]/n_k}}
  =\frac{\mathbb{E}[\phi_k]+C\sqrt{\mathbb{V}[\phi_k]/n_k}}
  {\mathbb{E}[\psi_k]}\frac{\mathbb{E}[\psi_k]}{\mathbb{E}[\psi_k]
  -C\sqrt{\mathbb{V}[\psi_k]/n_k}} \\
  =&\bigg(U_k+\frac{C\sqrt{\mathbb{V}[\phi_k]}}
  {\mathbb{E}[\psi_k]\sqrt{n_k}}\bigg)\frac{\sqrt{n_k}}
  {\sqrt{n_k}-C\sqrt{\mathbb{V}[\psi_k]}/\mathbb{E}[\psi_k]},
\end{align*}
since $\frac{\sqrt{t}}{\sqrt{t}-s}$ is monotonically decreasing with respect to $t$ when $t>s$ for any $s\in\mathbb{R}$, the condition \eqref{APP:eq:nk1} guarantees that
\begin{equation*}
  \frac{\sqrt{n_k}}{\sqrt{n_k}-C\sqrt{\mathbb{V}[\psi_k]}
  /\mathbb{E}[\psi_k]}\leqslant2,
\end{equation*}
thus, we obtain
\begin{equation}\label{APP:eq:mpp2}
  \frac{\bar{\phi}_k}{\bar{\psi}_k}\leqslant2U_k+2
  \frac{C\sqrt{\mathbb{V}[\phi_k]}}{\mathbb{E}[\psi_k]}
  \frac{1}{\sqrt{n_k}}.
\end{equation}
By noting that the Arithmetic Mean Geometric Mean inequality, it follows from \eqref{APP:eq:mpp1} and \eqref{APP:eq:mpp2} that, with probability at least $1-\frac{1}{C^2}$,
\begin{align*}
  \|x_{k+1}-x_*\|_2^2\leqslant\left(
  \frac{\bar{\phi}_k}{\bar{\psi}_k}\right)^2\leqslant
  \left(2U_k+2\frac{C\sqrt{\mathbb{V}[\phi_k]}}
  {\mathbb{E}[\psi_k]}\frac{1}{\sqrt{n_k}}\right)^2\leqslant8U_k^2+8
  \frac{C^2\mathbb{V}[\phi_k]}{(\mathbb{E}[\psi_k])^2}\frac{1}{n_k},
\end{align*}
and the proof is complete.\qed
\end{proof}

The following lemma shows the relationship between $U_k^2$ and $\|x_k-x_*\|_2^2$ under certain conditions. We will see later that these conditions are very easy to meet in the Algorithm \ref{APP:alg:APP}.
\begin{lemma}[Relationship between $U_k^2$ and $\|x_k-x_*\|_2^2$]
\label{APP:lem:I2N}
Under Assumption \ref{APP:ass:A}, suppose that the APP method (Algorithm \ref{APP:alg:APP}) is run with a regularization parameter $\lambda>0$ and a contraction factor $0<\rho<1$ such that
\begin{equation}\label{APP:eq:rholambda}
  \rho_\lambda=\frac{10\lambda^2L^{\frac{d}{2}}}{l^{\frac{d}{2}+2}} \exp\left(\frac{\lambda^2M}{2l}\right)<1
\end{equation}
and $\|x_k-x_*\|_2^2\leqslant\rho^kM$ for a fixed $M>0$ and $k\in\mathbb{N}$. Then the iterates of APP satisfy the following inequality:
\begin{equation*}
  U_k^2\leqslant\rho^k\frac{\rho_\lambda}{10}\frac{ld}{\lambda^2}
  +\frac{\rho_\lambda}{10}\|x_k-x_*\|_2^2,
\end{equation*}
where $U_k$ is defined by \eqref{APP:eq:IkR}.
\end{lemma}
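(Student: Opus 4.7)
The plan is to reduce $U_k$ to a ratio of Gaussian integrals covered by Lemma \ref{APP:lem:2int}, apply the two-sided bound from Assumption \ref{APP:ass:A} in opposite directions on numerator and denominator, and then simplify with elementary inequalities using the hypothesis $\|x_k-x_*\|_2^2 \le \rho^k M$.

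First I would set $\alpha=\rho^{-k}$ and apply Jensen's inequality (equivalently Cauchy--Schwarz) to the probability measure with density proportional to $\exp[-\alpha(f(x)+\tfrac{\lambda}{2}\|x-x_k\|_2^2)]$, turning the $\|x-x_*\|_2$ appearing in the numerator of $U_k$ into $\|x-x_*\|_2^2$ at the cost of squaring the whole ratio:
\[
U_k^2 \;\le\; \frac{\int_{\mathbb{R}^d}\|x-x_*\|_2^2\exp\bigl[-\alpha\bigl(f(x)+\tfrac{\lambda}{2}\|x-x_k\|_2^2\bigr)\bigr]\,\ud x}{\int_{\mathbb{R}^d}\exp\bigl[-\alpha\bigl(f(x)+\tfrac{\lambda}{2}\|x-x_k\|_2^2\bigr)\bigr]\,\ud x}.
\]
Then I would bound $f$ from below by $f_*+\tfrac{l}{2}\|x-x_*\|_2^2$ in the numerator integrand (to keep it an upper bound) and from above by $f_*+\tfrac{L}{2}\|x-x_*\|_2^2$ in the denominator integrand (to keep it a lower bound). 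The $\exp(-\alpha f_*)$ factors cancel, and the remaining integrals are exactly of the form handled by Lemma \ref{APP:lem:2int} with $u=x_*$, $v=x_k$, $\gamma=\lambda$, and $\beta\in\{l,L\}$.

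After substituting the closed forms from Lemma \ref{APP:lem:2int} and forming the ratio, the Gaussian normalizers combine into $\bigl((L+\lambda)/(l+\lambda)\bigr)^{d/2}$, the mismatched Gaussian exponents collapse (via $\frac{L}{2(L+\lambda)}-\frac{l}{2(l+\lambda)}=\frac{\lambda(L-l)}{2(L+\lambda)(l+\lambda)}$) into $\exp\!\bigl(\alpha\lambda^2(L-l)\|x_k-x_*\|_2^2/[2(L+\lambda)(l+\lambda)]\bigr)$, and the second-moment factor is $d/[\alpha(l+\lambda)]+\lambda^2\|x_k-x_*\|_2^2/(l+\lambda)^2$. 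The remaining work is loosening: applying the elementary inequalities $1/(l+\lambda)\le 1/l$, $(L+\lambda)/(l+\lambda)\le L/l$, and $(L-l)/[(L+\lambda)(l+\lambda)]\le 1/l$, together with $\alpha\|x_k-x_*\|_2^2\le M$ delivered by the hypothesis, I can bound the exponent by $\exp(\lambda^2 M/(2l))$ and the $d$-th power factor by $(L/l)^{d/2}$. Regrouping gives two surviving summands that should match $\rho^k\frac{\rho_\lambda}{10}\frac{ld}{\lambda^2}$ and $\frac{\rho_\lambda}{10}\|x_k-x_*\|_2^2$ exactly, by the definition $\rho_\lambda/10=(\lambda^2 L^{d/2}/l^{d/2+2})\exp(\lambda^2 M/(2l))$.

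The main obstacle is bookkeeping rather than any genuine analytic difficulty: I need to keep the opposite assignments of $l$ and $L$ straight across numerator and denominator (swapping them would give the wrong direction of inequality), verify that each of the three algebraic simplifications is tight enough to recover the stated constant $\rho_\lambda/10$ without a spurious factor, and note that the hypothesis $\rho_\lambda<1$ itself is not used in this lemma — it only becomes essential later, when this bound is chained with Lemma \ref{APP:lem:N2I} to propagate the contraction $\|x_{k+1}-x_*\|_2^2\le\rho^{k+1}M$ and thereby yield global linear convergence.
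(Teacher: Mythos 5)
Your proposal is correct and follows essentially the same route as the paper's proof: Jensen's inequality to pass to the second moment, the two-sided bound of Assumption \ref{APP:ass:A} applied in opposite directions to numerator and denominator, Lemma \ref{APP:lem:2int} with $u=x_*$, $v=x_k$, $\gamma=\lambda$, $\beta\in\{l,L\}$, and the same elementary loosenings to recover the constant $\rho_\lambda/10$. Your side remark that the hypothesis $\rho_\lambda<1$ is not actually used within this lemma is also accurate.
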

\begin{remark}
Note that for any $0<\epsilon<1$, there is $\lambda_\epsilon>0$ such that for every $\lambda<\lambda_\epsilon$, it holds that $\rho_\lambda<\epsilon$.
\end{remark}
\begin{proof}
According to Jensen¡¯s inequality for convex functions, it holds that
\begin{equation}\label{APP:eq:I2Nt1}
  U_k^2\leqslant
  \frac{\int_{\mathbb{R}^d}\|x\!-\!x_*\|_2^2\exp\big[-\rho^{-k}
  \big(f(x)+\frac{\lambda}{2}\|x-x_k\|_2^2\big)\big]\ud x}
  {\int_{\mathbb{R}^d}\exp\big[-\rho^{-k}\big(f(x)+
  \frac{\lambda}{2}\|x-x_k\|_2^2\big)\big]\ud x},
\end{equation}
further, from Assumption \ref{APP:ass:A}, i.e.,
\begin{equation*}
  f_*+\frac{l}{2}\|x-x_*\|_2^2\leqslant
  f(x)\leqslant f_*+\frac{L}{2}\|x-x_*\|_2^2,
\end{equation*}
one can first observe that, for the fraction on the right-hand side of \eqref{APP:eq:I2Nt1}, we have the upper bound of the numerator
\begin{equation}\label{APP:eq:I2Nt2}
  \exp(-\rho^{-k}f_*)\int_{\mathbb{R}^d}\|x-x_*\|_2^2
  \exp\Big[-\rho^{-k}\Big(\frac{l}{2}\|x-x_*\|_2^2+\frac{\lambda}{2}
  \|x-x_k\|_2^2\Big)\Big]\ud x
\end{equation}
and the lower bound of the denominator
\begin{equation}\label{APP:eq:I2Nt3}
\begin{split}
  \exp(-\rho^{-k}f_*)\int_{\mathbb{R}^d}
  \exp\Big[-\rho^{-k}\Big(\frac{L}{2}\|x-x_*\|_2^2
  +\frac{\lambda}{2}\|x-x_k\|_2^2\Big)\Big]\ud x.
\end{split}
\end{equation}
So it follows from \eqref{APP:eq:I2Nt1} - \eqref{APP:eq:I2Nt3} that
\begin{equation}\label{APP:eq:I2Nt4}
  U_k^2\leqslant\frac{\int_{\mathbb{R}^d}\|x-x_*\|_2^2
  \exp\big[-\rho^{-k}\big(\frac{l}{2}\|x-x_*\|_2^2+\frac{\lambda}{2}
  \|x-x_k\|_2^2\big)\big]\ud x}
  {\int_{\mathbb{R}^d}\exp\big[-\rho^{-k}
  \big(\frac{L}{2}\|x-x_*\|_2^2+\frac{\lambda}{2}
  \|x-x_k\|_2^2\big)\big]\ud x}.
\end{equation}
Further, according to Lemma \ref{APP:lem:2int}, the numerator of the fraction on the right-hand side of \eqref{APP:eq:I2Nt4} equals to
\begin{equation*}
  \exp\left(-\frac{\rho^{-k}l\lambda\|x_k-x_*\|_2^2}
  {2(l+\lambda)}\right)\left(\frac{2\pi\rho^k}
  {l\!+\!\lambda}\right)^{\frac{d}{2}}
  \!\left(\frac{\rho^kd}{l\!+\!\lambda}
  +\frac{\lambda^2\|x_k-x_*\|_2^2}{(l+\lambda)^2}\right)
\end{equation*}
and the corresponding denominator equals to
\begin{equation*}
  \exp\left(-\frac{\rho^{-k}L\lambda\|x_k-x_*\|_2^2}
  {2(L+\lambda)}\right)\left(\frac{2\pi\rho^k}
  {L+\lambda}\right)^{\frac{d}{2}}.
\end{equation*}
Thus, by noting that $\|x_k-x_*\|_2^2\leqslant\rho^kM$ and
\begin{equation*}
  \rho_\lambda=\frac{10\lambda^2L^{\frac{d}{2}}}{l^{\frac{d}{2}+2}} \exp\left(\frac{\lambda^2M}{2l}\right)<1,
\end{equation*}
we obtain
\begin{align*}
  U_k^2\leqslant&\exp\left(\frac{\lambda^2(L-l)
  \rho^{-k}\|x_k-x_*\|_2^2}{2(L+\lambda)(l+\lambda)}\right)\!
  \left(\frac{L+\lambda}{l+\lambda}\right)^{\frac{d}{2}}\!\!
  \left(\frac{\rho^kd}{l+\lambda}
  +\frac{\lambda^2\|x_k-x_*\|_2^2}{(l+\lambda)^2}\right) \\
  \leqslant&\exp\left(\frac{\lambda^2(L-l)M}{2Ll}\right)\!
  \left(\frac{L}{l}\right)^{\frac{d}{2}}\!\!
  \left(\frac{\rho^kd}{l}
  +\frac{\lambda^2\|x_k-x_*\|_2^2}{l^2}\right) \\
  \leqslant&\rho^k\frac{\rho_\lambda}{10}\frac{ld}{\lambda^2}
  +\frac{\rho_\lambda}{10}\|x_k-x_*\|_2^2,
\end{align*}
and the proof is complete.\qed
\end{proof}

Now we turn to two quantities, i.e., $\frac{\mathbb{V}[\psi_k]}
{(\mathbb{E}[\psi_k])^2}$ and $\frac{\mathbb{V}[\phi_k]}
{(\mathbb{E}[\psi_k])^2}$, appearing in Lemma \ref{APP:lem:N2I}. The following lemma gives their upper bounds. And this is the last of the three fundamental lemmas.
\begin{lemma}[Upper bounds for $\frac{\mathbb{V}[\psi_k]}
{(\mathbb{E}[\psi_k])^2}$ and $\frac{\mathbb{V}[\phi_k]}
{(\mathbb{E}[\psi_k])^2}$]\label{APP:lem:EV}
Suppose the conditions of Lemma \ref{APP:lem:I2N} hold. The APP iterates (Algorithm \ref{APP:alg:APP}) satisfy that, for all $k\in\mathbb{N}$,
\begin{equation*}
  \frac{\mathbb{V}[\psi_k]}{(\mathbb{E}[\psi_k])^2}\leqslant
  \frac{\rho_\lambda l^2}{10\lambda^2}\frac{(L+\lambda)^d}
  {2^{\frac{d}{2}}\lambda^{\frac{d}{2}}L^{\frac{d}{2}}}
  \exp\bigg(\frac{\lambda M}{2}\bigg)
\end{equation*}
and
\begin{equation*}
  \frac{\mathbb{V}[\phi_k]}{(\mathbb{E}[\psi_k])^2}
  \leqslant\frac{(L+\lambda)^d}
  {2^{\frac{d}{2}}\lambda^{\frac{d}{2}}L^{\frac{d}{2}}}
  \exp\bigg(\frac{\lambda M}{2}\bigg)
  \left(\rho^k\frac{\rho_\lambda}{20}\frac{ld}{\lambda^2}
  +\frac{\rho_\lambda}{40}\|x_k-x_*\|_2^2\right).
\end{equation*}
\end{lemma}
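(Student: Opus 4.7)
The strategy is to replace each variance by its second moment, using the trivial bounds $\mathbb{V}[\psi_k] \leqslant \mathbb{E}[\psi_k^2]$ and $\mathbb{V}[\phi_k] \leqslant \mathbb{E}[\phi_k^2]$, and then to squeeze the resulting ratios using Assumption \ref{APP:ass:A}. I would use the lower bound $f(x) \geqslant f_* + \frac{l}{2}\|x-x_*\|_2^2$ to upper-bound the numerators $\mathbb{E}[\psi_k^2]$ and $\mathbb{E}[\phi_k^2]$, and the upper bound $f(x) \leqslant f_* + \frac{L}{2}\|x-x_*\|_2^2$ to lower-bound the denominator $\mathbb{E}[\psi_k]$. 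The factors $\exp(-c\rho^{-k}f_*)$ cancel in each ratio, reducing everything to Gaussian integrals of the type handled by Lemma \ref{APP:lem:2int}.

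For the numerators I would apply Lemma \ref{APP:lem:2int} with $\alpha = \rho^{-k}$, $\beta = 2l$, $\gamma = \lambda$, $u = x_*$, $v = x_k$ (noting that $\psi_k^2$ doubles the $l/2$ coefficient), using the first formula for the $\psi_k^2$ piece and the second formula for the $\phi_k^2$ piece; for the denominator I would apply the first formula with $\beta = L$ in place of $2l$ and then square. Forming the ratios, the Gaussian normalization constants collapse into a prefactor of the form $\frac{(L+\lambda)^d}{\lambda^{d/2}(2l+\lambda)^{d/2}}$, which I would bound above by $\frac{(L+\lambda)^d}{(2\lambda l)^{d/2}}$ via $2l+\lambda \geqslant 2l$. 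For the $\phi_k$ estimate the second formula of Lemma \ref{APP:lem:2int} also contributes the polynomial factor $\frac{\rho^k d}{2l+\lambda} + \frac{\lambda^2\|x_k-x_*\|_2^2}{(2l+\lambda)^2}$, which I would analogously bound above by $\frac{\rho^k d}{2l} + \frac{\lambda^2\|x_k-x_*\|_2^2}{4l^2}$.

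The main obstacle is controlling the exponent left over once the integrals are evaluated, namely $\rho^{-k}\lambda\|x_k-x_*\|_2^2 \bigl(\tfrac{L}{L+\lambda} - \tfrac{l}{2l+\lambda}\bigr)$. Here I would use the clean estimate
\begin{equation*}
  \frac{L}{L+\lambda} - \frac{l}{2l+\lambda}
  = \frac{Ll + \lambda(L-l)}{(L+\lambda)(2l+\lambda)}
  \leqslant \frac{L(l+\lambda)}{L\cdot 2l}
  = \frac{l+\lambda}{2l},
\end{equation*}
obtained by dropping $-\lambda l$ in the numerator and using $L+\lambda \geqslant L$ together with $2l+\lambda \geqslant 2l$. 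Combined with the hypothesis $\|x_k-x_*\|_2^2 \leqslant \rho^k M$, this yields the factor $\exp(\tfrac{\lambda M}{2} + \tfrac{\lambda^2 M}{2l})$, whose second summand is exactly the $\exp(\tfrac{\lambda^2 M}{2l})$ hidden inside $\rho_\lambda$. The remainder is pure bookkeeping: the leftover $L^{d/2}/l^{d/2}$ factor, multiplied by this exponential, reassembles into $\frac{\rho_\lambda l^2}{10\lambda^2}$, giving the stated bound for $\psi_k$; distributing the same prefactor through the two summands of the polynomial factor for $\phi_k$ produces exactly the coefficients $\frac{\rho_\lambda}{20}\frac{ld}{\lambda^2}$ and $\frac{\rho_\lambda}{40}$. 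No conceptual tools beyond Lemma \ref{APP:lem:2int} and Assumption \ref{APP:ass:A} are required.
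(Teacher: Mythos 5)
Your proposal follows essentially the same route as the paper's proof: bound each variance by its second moment, use the two sides of Assumption \ref{APP:ass:A} to squeeze the ratio between Gaussian integrals evaluated via Lemma \ref{APP:lem:2int} (with $\beta=2l$ for the squared numerators and $\beta=L$ for the squared denominator), control the leftover exponent by $\tfrac{\lambda M}{2}+\tfrac{\lambda^2 M}{2l}$, and reassemble the constants through $\rho_\lambda$. All of your intermediate estimates, including the exponent bound $\tfrac{L}{L+\lambda}-\tfrac{l}{2l+\lambda}\leqslant\tfrac{l+\lambda}{2l}$ and the final coefficient bookkeeping, agree with (and are algebraically equivalent to) the paper's.
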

\begin{proof}
Noting that
\begin{equation*}
  \frac{\mathbb{V}[\psi_k]}{(\mathbb{E}[\psi_k])^2}\leqslant
  \frac{\mathbb{E}[\psi_k^2]}{(\mathbb{E}[\psi_k])^2}~~\textrm{and}
  ~~\frac{\mathbb{V}[\phi_k]}{(\mathbb{E}[\psi_k])^2}\leqslant
  \frac{\mathbb{E}[\phi_k^2]}{(\mathbb{E}[\psi_k])^2},
\end{equation*}
we need to establish a lower bound for $\mathbb{E}[\psi_k]$ and upper bounds for $\mathbb{E}[\psi_k^2]$ and $\mathbb{E}[\phi_k^2]$. And, of course, these upper bound are crude relatively.

We first establish a lower bound for $\mathbb{E}[\psi_k]$. Note that
\begin{align*}
  \mathbb{E}[\psi_k]=&\int_\Theta
  \exp\big(-\rho^{-k}f(\theta)\big)\ud P_\Theta(\theta) \\
  =&\left(\frac{\rho^{-k}\lambda}{2\pi}\right)^{\frac{d}{2}}
  \int_{\mathbb{R}^d}\exp\Big[-\rho^{-k}\Big(f(x)+\frac{\lambda}{2}
  \|x-x_k\|_2^2\Big)\Big]\ud x.
\end{align*}
together with Assumption \ref{APP:ass:A}, then yields
\begin{equation*}
  \mathbb{E}[\psi_k]\geqslant E_k
  \left(\frac{\rho^{-k}\lambda}{2\pi}\right)^{\frac{d}{2}}
  \int_{\mathbb{R}^d}\exp\Big[-\rho^{-k}
  \Big(\frac{L}{2}\|x-x_*\|_2^2+\frac{\lambda}{2}
  \|x-x_k\|_2^2\Big)\Big]\ud x,
\end{equation*}
where $E_k=\exp(-\rho^{-k}f_*)$; further, according to Lemma \ref{APP:lem:2int}, one obtains
\begin{equation}\label{APP:eq:NB1}
  \mathbb{E}[\psi_k]\geqslant E_k
  \left(\frac{\lambda}{L+\lambda}\right)^{\frac{d}{2}}\exp
  \left(-\frac{\rho^{-k}L\lambda\|x_k-x_*\|_2^2}{2(L+\lambda)}\right).
\end{equation}

Now we establish an upper bound for $\mathbb{E}[\psi_k^2]$. Note that
\begin{align*}
  \mathbb{E}[\psi_k^2]=&\int_\Theta
  \exp\big(-2\rho^{-k}f(\theta)\big)\ud P_\Theta(\theta) \\
  =&\left(\frac{\rho^{-k}\lambda}{2\pi}\right)^{\frac{d}{2}}
  \int_{\mathbb{R}^d}\exp\Big[-\rho^{-k}\Big(2f(x)
  +\frac{\lambda}{2}\|x-x_k\|_2^2\Big)\Big]\ud x,
\end{align*}
together with Assumption \ref{APP:ass:A} and Lemma \ref{APP:lem:2int}, then yields
\begin{align}
  \mathbb{E}[\psi_k^2]\leqslant&E_k^2
  \left(\frac{\rho^{-k}\lambda}{2\pi}\right)^{\frac{d}{2}}
  \int_{\mathbb{R}^d}\exp\Big[-\rho^{-k}
  \Big(l\|x-x_*\|_2^2+\frac{\lambda}{2}
  \|x-x_k\|_2^2\Big)\Big]\ud x \notag\\
  =&E_k^2\left(\frac{\lambda}{2l+\lambda}\right)^{\frac{d}{2}}
  \exp\left(-\frac{\rho^{-k}l\lambda\|x_k-x_*\|_2^2}{2l+\lambda}\right).
  \label{APP:eq:NB2}
\end{align}
Therefore, according to \eqref{APP:eq:NB1} and \eqref{APP:eq:NB2}, and together with
\begin{equation}\label{APP:eq:NB3}
  \exp\bigg(\frac{\rho^{-k}L\lambda\|x_k-x_*\|_2^2}{L+\lambda}
  -\frac{\rho^{-k}l\lambda\|x_k-x_*\|_2^2}{2l+\lambda}\bigg)\leqslant
  \exp\bigg(\frac{\lambda M\big(Ll+(L-l)\lambda\big)}{2Ll}\bigg),
\end{equation}
we get the following bound
\begin{align*}
  \frac{\mathbb{V}[\psi_k]}{(\mathbb{E}[\psi_k])^2}\leqslant&
  \left(\frac{(L+\lambda)^2}{\lambda(2l+\lambda)}\right)^{\frac{d}{2}}
  \exp\bigg(\frac{\lambda M\big(Ll+(L-l)\lambda\big)}{2Ll}\bigg) \\
  \leqslant&\frac{\rho_\lambda l^2}{10\lambda^2}\frac{(L+\lambda)^d}
  {2^{\frac{d}{2}}\lambda^{\frac{d}{2}}L^{\frac{d}{2}}}
  \exp\bigg(\frac{\lambda M}{2}\bigg).
\end{align*}

Similarly, we establish an upper bound for $\mathbb{E}[\phi_k^2]$. Note that
\begin{align*}
  \mathbb{E}[\phi_k^2]=&\int_\Theta\|\theta-x_*\|_2^2
  \exp\big(-2\rho^{-k}f(\theta)\big)\ud P_\Theta(\theta) \\
  =&\left(\frac{\rho^{-k}\lambda}{2\pi}\right)^{\frac{d}{2}}
  \int_{\mathbb{R}^d}\|x-x_*\|_2^2\exp\Big[-\rho^{-k}
  \Big(2f(x)+\frac{\lambda}{2}\|x-x_k\|_2^2\Big)\Big]\ud x,
\end{align*}
together with Assumption \ref{APP:ass:A} and Lemma \ref{APP:lem:2int}, this yields
\begin{align*}
  \mathbb{E}[\phi_k^2]\leqslant&E_k^2
  \left(\frac{\rho^{-k}\lambda}{2\pi}\right)^{\frac{d}{2}}
  \int_{\mathbb{R}^d}\|x-x_k\|_2^2\exp\Big[-\rho^{-k}
  \Big(l\|x-x_*\|_2^2+\frac{\lambda}{2}
  \|x-x_k\|_2^2\Big)\Big]\ud x \\
  =&E_k^2\left(\frac{\lambda}{2l+\lambda}\right)^{\frac{d}{2}}\exp
  \left(-\frac{\rho^{-k}l\lambda\|x_k-x_*\|_2^2}{2l+\lambda}\right)
  \left(\frac{\rho^kd}{2l+\lambda}
  +\frac{\lambda^2\|x_k-x_*\|_2^2}{(2l+\lambda)^2}\right),
\end{align*}
Therefore, together with \eqref{APP:eq:NB1} and \eqref{APP:eq:NB3}, we get
\begin{align*}
  \frac{\mathbb{V}[\phi_k]}{(\mathbb{E}[\psi_k])^2}\leqslant&
  \left(\frac{(L+\lambda)^2}{\lambda(2l+\lambda)}\right)^{\frac{d}{2}}
  \exp\bigg(\frac{\lambda M\big(Ll+(L-l)\lambda\big)}{2Ll}\bigg)
  \left(\frac{\rho^kd}{2l+\lambda}
  +\frac{\lambda^2\|x_k-x_*\|_2^2}{(2l+\lambda)^2}\right) \\
  \leqslant&\rho^k\rho_\lambda\frac{l(L+\lambda)^dd}
  {2^{\frac{d}{2}+1}\lambda^{\frac{d}{2}+2}L^{\frac{d}{2}}}
  \exp\bigg(\frac{\lambda M}{2}\bigg)
  +\rho_\lambda\frac{(L+\lambda)^d}
  {2^{\frac{d}{2}+2}\lambda^{\frac{d}{2}}L^{\frac{d}{2}}}
  \exp\bigg(\frac{\lambda M}{2}\bigg)\|x_k-x_*\|_2^2 \\
  \leqslant&\frac{(L+\lambda)^d}
  {2^{\frac{d}{2}}\lambda^{\frac{d}{2}}L^{\frac{d}{2}}}
  \exp\bigg(\frac{\lambda M}{2}\bigg)
  \left(\rho^k\frac{\rho_\lambda}{20}\frac{ld}{\lambda^2}
  +\frac{\rho_\lambda}{40}\|x_k-x_*\|_2^2\right),
\end{align*}
and the proof is complete.\qed
\end{proof}

\subsection{Main results}

First, the expected relationship between $\|x_{k+1}-x_*\|_2^2$ and $\|x_k-x_*\|_2^2$ for a fixed $k\in\mathbb{N}$ is established in the following theorem.
\begin{theorem}[Convergence of one-step iteration]
\label{APP:thm:main1}
Under Assumption \ref{APP:ass:A}, suppose that the APP method (Algorithm \ref{APP:alg:APP}) is run with a regularization parameter $\lambda>0$, a contraction factor $0<\rho<1$ and $n\in\mathbb{N}$ such that $\|x_k-x_*\|_2^2\leqslant\rho^kM$ for a fixed $M>0$ and $k\in\mathbb{N}$,
\begin{equation*}
  \rho_\lambda=\frac{10\lambda^2L^{\frac{d}{2}}}{l^{\frac{d}{2}+2}} \exp\left(\frac{\lambda^2M}{2l}\right)<1,
\end{equation*}
and
\begin{equation*}
  n\geqslant\frac{C^2(L+\lambda)^d}
  {2^{\frac{d}{2}}\lambda^{\frac{d}{2}}L^{\frac{d}{2}}}
  \exp\bigg(\frac{\lambda M}{2}\bigg)
  \max\left\{1,\frac{2l^2}{5\lambda^2}\right\}.
\end{equation*}
Then with probability at least $1-\frac{1}{C^2}$, the iterates of APP satisfy:
\begin{equation}\label{APP:eq:main1}
  \|x_{k+1}-x_*\|_2^2\leqslant\rho^k\rho_\lambda
  \frac{6ld}{5\lambda^2}+\rho_\lambda\|x_k-x_*\|_2^2.
\end{equation}
\end{theorem}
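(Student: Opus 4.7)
The plan is to assemble the bound by chaining the three fundamental lemmas from the previous subsection: Lemma \ref{APP:lem:N2I} converts the sample-based iterate into a statement about $U_k^2$ and $\mathbb{V}[\phi_k]/(\mathbb{E}[\psi_k])^2$, Lemma \ref{APP:lem:I2N} controls $U_k^2$ by $\|x_k-x_*\|_2^2$, and Lemma \ref{APP:lem:EV} bounds both variance ratios. Since these lemmas all presume $\|x_k-x_*\|_2^2\leqslant\rho^kM$ together with $\rho_\lambda<1$, the hypotheses of the theorem already line everything up; the only genuine verification is that the stated sample size $n$ is large enough to satisfy the ``$n\geqslant n_k$'' precondition of Lemma \ref{APP:lem:N2I}.

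First I would verify this precondition. By Lemma \ref{APP:lem:EV}, $4C^2\mathbb{V}[\psi_k]/(\mathbb{E}[\psi_k])^2$ is bounded above by $\tfrac{2C^2\rho_\lambda l^2}{5\lambda^2}\cdot\tfrac{(L+\lambda)^d}{2^{d/2}\lambda^{d/2}L^{d/2}}\exp(\lambda M/2)$, and using $\rho_\lambda<1$ this is majorized by the $\max\{1,2l^2/(5\lambda^2)\}$-branch of the assumed lower bound on $n$. So Lemma \ref{APP:lem:N2I} applies and, with probability at least $1-1/C^2$,
\begin{equation*}
  \|x_{k+1}-x_*\|_2^2 \leqslant 8U_k^2 + 8\,\frac{C^2\mathbb{V}[\phi_k]}{(\mathbb{E}[\psi_k])^2}\,\frac{1}{n}.
\end{equation*}

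Next I would substitute the $U_k^2$ estimate from Lemma \ref{APP:lem:I2N}, giving $8U_k^2\leqslant \tfrac{4\rho^k\rho_\lambda ld}{5\lambda^2} + \tfrac{4\rho_\lambda}{5}\|x_k-x_*\|_2^2$. For the remaining term, the hypothesis on $n$ also guarantees $\tfrac{C^2}{n}\cdot\tfrac{(L+\lambda)^d}{2^{d/2}\lambda^{d/2}L^{d/2}}\exp(\lambda M/2)\leqslant 1$, so the $\mathbb{V}[\phi_k]$-bound from Lemma \ref{APP:lem:EV} yields $8C^2\mathbb{V}[\phi_k]/((\mathbb{E}[\psi_k])^2 n)\leqslant \tfrac{2\rho^k\rho_\lambda ld}{5\lambda^2} + \tfrac{\rho_\lambda}{5}\|x_k-x_*\|_2^2$. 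Adding the two pieces gives exactly $\tfrac{6\rho^k\rho_\lambda ld}{5\lambda^2}+\rho_\lambda\|x_k-x_*\|_2^2$, which is \eqref{APP:eq:main1}.

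The proof is essentially bookkeeping; there is no real obstacle beyond tracking constants. The one subtlety worth double-checking is that the two uses of the sample-size hypothesis (controlling the $\mathbb{V}[\psi_k]$-ratio to unlock Lemma \ref{APP:lem:N2I}, and then killing the $1/n$ factor so that the $\mathbb{V}[\phi_k]$ contribution carries the proper $\rho_\lambda$ weight) both go through with the same single bound on $n$; this is precisely why the $\max\{1,2l^2/(5\lambda^2)\}$ appears in the hypothesis, and why the $\rho_\lambda<1$ assumption is invoked at both places.
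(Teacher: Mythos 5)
Your proposal is correct and follows essentially the same route as the paper's own proof: verify the sample-size precondition of Lemma \ref{APP:lem:N2I} via the $\mathbb{V}[\psi_k]$ bound of Lemma \ref{APP:lem:EV} and $\rho_\lambda<1$, then combine the $U_k^2$ bound of Lemma \ref{APP:lem:I2N} with the $\mathbb{V}[\phi_k]$ bound of Lemma \ref{APP:lem:EV}, and the constants $\tfrac{4}{5}+\tfrac{2}{5}=\tfrac{6}{5}$ and $\tfrac{4}{5}+\tfrac{1}{5}=1$ check out exactly as in the paper.
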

\begin{remark}
As mentioned above, our method is not directly dependent on the slow convergence rate of Monte Carlo methods. We will see that numerical
experiments in Sect. \ref{APP:s4} provides practical evidence for this claim, for example, $n=95$ could work well in $500$ dimensions, and this is obviously inconsistent with practical experience of Monte Carlo methods.
\end{remark}
\begin{proof}
For any $C>0$, since $\rho_\lambda<1$, we have
\begin{equation*}
  n\geqslant\frac{C^2(L+\lambda)^d}
  {2^{\frac{d}{2}}\lambda^{\frac{d}{2}}L^{\frac{d}{2}}}
  \exp\bigg(\frac{\lambda M}{2}\bigg)
  \max\left\{1,\frac{2\rho_\lambda l^2}{5\lambda^2}\right\},
\end{equation*}
and according to Lemma \ref{APP:lem:EV}, we obtain
\begin{equation*}
  n\geqslant\frac{4C^2\mathbb{V}[\psi_k]}{(\mathbb{E}[\psi_k])^2}
  ~~~~\textrm{and}~~~~
  \frac{C^2\mathbb{V}[\phi_k]}{(\mathbb{E}[\psi_k])^2}
  \frac{1}{n}\leqslant\rho^k\frac{\rho_\lambda}{20}\frac{ld}{\lambda^2}
  +\frac{\rho_\lambda}{40}\|x_k-x_*\|_2^2.
\end{equation*}
Together with Lemma \ref{APP:lem:N2I}, we further obtain
\begin{equation*}
  \|x_{k+1}-x_*\|_2^2\leqslant8U_k^2+
  \rho^k\frac{2\rho_\lambda}{5}\frac{ld}{\lambda^2}
  +\frac{\rho_\lambda}{5}\|x_k-x_*\|_2^2,
\end{equation*}
by noting that Lemma \ref{APP:lem:I2N}, we finally obtain, with probability at least $1-\frac{1}{C^2}$,
\begin{align*}
  \|x_{k+1}-x_*\|_2^2\leqslant&8\left(\rho^k\frac{\rho_\lambda}{10}
  \frac{ld}{\lambda^2}+\frac{\rho_\lambda}{10}\|x_k-x_*\|_2^2\right)+
  \rho^k\frac{2\rho_\lambda}{5}\frac{ld}{\lambda^2}
  +\frac{\rho_\lambda}{5}\|x_k-x_*\|_2^2 \\
  \leqslant&\rho^k\rho_\lambda
  \frac{6ld}{5\lambda^2}+\rho_\lambda\|x_k-x_*\|_2^2,
\end{align*}
and the proof is complete.\qed
\end{proof}

The following theorem states $\|x_{k+1}-x_*\|_2^2=\mathcal{O}(\rho^{k+1})$ for all $k\in\mathbb{N}$ in probability. It seems that the convergence of the algorithm implies that each iteration needs to be contracted, i.e., after $k$ iterations, the probability of convergence is $(1-\frac{1}{C^2})^k$, which rapidly tends to $0$ as $k$ increases. However, the APP algorithm actually converges with probability $1$ by using the appropriate number of repetitions with a same variance, as we will see in Theorem \ref{APP:thm:main3}.
\begin{theorem}[A direct extension of Theorem \ref{APP:thm:main1}]
\label{APP:thm:main2}
Under Assumption \ref{APP:ass:A}, suppose that the APP method (Algorithm \ref{APP:alg:APP}) is run with a regularization parameter $\lambda>0$, a contraction factor $0<\rho<1$ and $n\in\mathbb{N}$ such that
\begin{equation*}
  \rho_\lambda=\frac{10\lambda^2L^{\frac{d}{2}}}{l^{\frac{d}{2}+2}} \exp\left(\frac{\lambda^2M}{2l}\right)<\rho<1
\end{equation*}
and
\begin{equation*}
  n\geqslant\frac{C^2(L+\lambda)^d}
  {2^{\frac{d}{2}}\lambda^{\frac{d}{2}}L^{\frac{d}{2}}}
  \exp\bigg(\frac{\lambda M}{2}\bigg)
  \max\left\{1,\frac{2l^2}{5\lambda^2}\right\}.
\end{equation*}
Then in probability, the iterates of APP satisfy for all $k\in\mathbb{N}$:
\begin{equation*}
  \|x_{k+1}-x_*\|_2^2\leqslant
  \rho^k\frac{\rho_\lambda^2}{\rho-\rho_\lambda}
  \frac{6ld}{5\lambda^2}+\rho_\lambda^k\|x_1-x_*\|_2^2
  \leqslant\rho^{k+1}M,
\end{equation*}
where
\begin{equation*}
  M=\frac{6ld}{5\lambda^2}
  \max\left\{1,\frac{\rho_\lambda}{\rho-\rho_\lambda}\right\} +\rho^{-1}\|x_1-x_*\|_2^2.
\end{equation*}
\end{theorem}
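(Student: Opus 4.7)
The plan is a straightforward induction on $k$, with Theorem \ref{APP:thm:main1} providing the one-step contraction and the constant $M$ arranged precisely so that the induction closes.

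First I would establish the weaker bound $\|x_k-x_*\|_2^2 \leqslant \rho^k M$ by induction on $k$. The base case $k=1$ is immediate, since $M$ contains the summand $\rho^{-1}\|x_1-x_*\|_2^2$, whence $\|x_1-x_*\|_2^2 \leqslant \rho M$. For the inductive step, under the hypothesis $\|x_k-x_*\|_2^2 \leqslant \rho^k M$, the assumptions of Theorem \ref{APP:thm:main1} hold with the very same $M$, $\lambda$, $\rho$, $n$, so with probability at least $1-1/C^2$,
\begin{equation*}
  \|x_{k+1}-x_*\|_2^2 \leqslant \rho^k\rho_\lambda B + \rho_\lambda\|x_k-x_*\|_2^2 \leqslant \rho^k\rho_\lambda(B+M),
\end{equation*}
where I abbreviate $B := \frac{6ld}{5\lambda^2}$. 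To close the induction I need $\rho_\lambda(B+M) \leqslant \rho M$, i.e.\ $M \geqslant \rho_\lambda B/(\rho-\rho_\lambda)$; this is exactly what the factor $\max\{1,\rho_\lambda/(\rho-\rho_\lambda)\}$ in the definition of $M$ guarantees, using $\rho_\lambda<\rho$.

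For the finer bound, I would then unroll the linear recursion $a_{k+1} \leqslant \rho_\lambda a_k + \rho^k\rho_\lambda B$ (with $a_k := \|x_k-x_*\|_2^2$) to obtain
\begin{equation*}
  a_{k+1} \leqslant \rho_\lambda^k a_1 + \rho_\lambda B\sum_{j=1}^{k}\rho_\lambda^{k-j}\rho^j.
\end{equation*}
The geometric sum equals $\rho(\rho^k-\rho_\lambda^k)/(\rho-\rho_\lambda)$, which is bounded by a constant multiple of $\rho^k/(\rho-\rho_\lambda)$; this produces the first displayed bound in the theorem, and the chain $\leqslant \rho^{k+1}M$ then follows from the previous paragraph together with the defining properties of $M$.

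The main subtlety is not algebraic but probabilistic: each invocation of Theorem \ref{APP:thm:main1} succeeds only with probability $1-1/C^2$, so concatenating $k$ iterations with independent samples gives the joint bound with probability at most $(1-1/C^2)^k$, which degrades to $0$ as $k\to\infty$. This is exactly why the statement is phrased as convergence ``in probability'' rather than with a uniform high-probability guarantee, and why, as the authors foreshadow, Theorem \ref{APP:thm:main3} is subsequently needed to upgrade the conclusion to convergence with probability one by repeating the sampling within each iteration so that the per-iteration failure probability shrinks quickly enough to preserve the joint event across all $k$.
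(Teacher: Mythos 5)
Your proposal is correct and follows essentially the same route as the paper: use the one-step contraction of Theorem \ref{APP:thm:main1}, close an induction on $\|x_k-x_*\|_2^2\leqslant\rho^k M$ via the $\max\{1,\rho_\lambda/(\rho-\rho_\lambda)\}$ term in $M$, unroll the linear recursion with a geometric sum, and flag the $(1-1/C^2)^k$ degradation that motivates Theorem \ref{APP:thm:main3}. One small remark: your unrolling honestly yields the constant $\rho\rho_\lambda/(\rho-\rho_\lambda)$ rather than the stated $\rho_\lambda^2/(\rho-\rho_\lambda)$ (the paper's own bounding of the geometric sum has the same slippage, since $\rho_\lambda\sum_{j\geqslant0}(\rho_\lambda/\rho)^j=\rho\rho_\lambda/(\rho-\rho_\lambda)$), but this does not affect the final conclusion $\leqslant\rho^{k+1}M$, which your induction already secures.
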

\begin{proof}
First, it follows from the definition of $M$ that $\|x_1-x_*\|_2^2\leqslant\rho M$; therefore, according to Theorem \ref{APP:thm:main2}, it follows that
\begin{equation*}
  \|x_2-x_*\|_2^2\leqslant
  \rho\rho_\lambda\frac{6ld}{5\lambda^2}+\rho_\lambda\|x_1-x_*\|_2^2
\end{equation*}
in probability; meanwhile, since $\rho_\lambda<\rho$, we further have
\begin{equation*}
  \|x_2-x_*\|_2^2\leqslant\rho^2\left(\frac{6ld}{5\lambda^2}
  +\rho^{-1}\|x_1-x_*\|_2^2\right)\leqslant\rho^2M.
\end{equation*}
And similarly, we could get
\begin{equation*}
  \|x_3-x_*\|_2^2\leqslant\rho^2\rho_\lambda
  \frac{6ld}{5\lambda^2}\left(1+\frac{\rho_\lambda}{\rho}\right)
  +\rho_\lambda^2\|x_1-x_*\|_2^2\leqslant\rho^3M.
\end{equation*}
Doing it recursively, one obtain that, in probability, for all $k\in\mathbb{N}$, the iterates of APP satisfy
\begin{align*}
  \|x_{k+1}-x_*\|_2^2\leqslant&\rho^k\rho_\lambda
  \frac{6ld}{5\lambda^2}\left(1+\frac{\rho_\lambda}{\rho}+
  \cdots+\frac{\rho_\lambda^k}{\rho^k}\right)
  +\rho_\lambda^k\|x_1-x_*\|_2^2 \\
  \leqslant&\rho^k\frac{\rho_\lambda^2}{\rho-\rho_\lambda}
  \frac{6ld}{5\lambda^2}+\rho_\lambda^k\|x_1-x_*\|_2^2
  \leqslant\rho^{k+1}M,
\end{align*}
and the proof is complete.\qed
\end{proof}

The following conclusion shows that the APP iterates linearly converges to $x_*$, almost with probability one.
\begin{theorem}
\label{APP:thm:main3}
Under Assumption \ref{APP:ass:A}, suppose that the APP method (Algorithm \ref{APP:alg:APP}) is run with a regularization parameter $\lambda>0$, a contraction factor $0<\rho<1$, $n\in\mathbb{N}$ and $\gamma>1$ such that
\begin{equation*}
  \rho_\lambda=\frac{10\lambda^2L^{\frac{d}{2}}}{l^{\frac{d}{2}+2}} \exp\left(\frac{\lambda^2M'}{2l}\right)<\frac{\rho}{\gamma}
\end{equation*}
and
\begin{equation*}
  n\geqslant\frac{C^2(L+\lambda)^d}
  {2^{\frac{d}{2}}\lambda^{\frac{d}{2}}L^{\frac{d}{2}}}
  \exp\bigg(\frac{\lambda M'}{2}\bigg)
  \max\left\{1,\frac{2l^2}{5\lambda^2}\right\}.
\end{equation*}
If at least one of every $s$ successive iterates, say, $\{x_{i+j}\}_{j=1}^s$, satisfies the contraction inequality \eqref{APP:eq:main1} and $\|x_{i+j}-x_*\|_2^2\leqslant \gamma\|x_i-x_*\|_2^2$ for every $0\leqslant j\leqslant s$, and the variance is reduced once after $s$ iterations, then the iterates of APP satisfy for all $k\in\mathbb{N}$:
\begin{align*}
  \|x_{ks}-x_*\|_2^2\leqslant\frac{\rho^{k+1}}{\gamma-1}
  \frac{6ld}{5\lambda^2}+\rho^k\|x_1-x_*\|_2^2,
\end{align*}
and for every $0\leqslant j\leqslant s$,
\begin{align*}
  \|x_{ks+j}-x_*\|_2^2\leqslant\rho^{k+1}M',
\end{align*}
where
\begin{equation*}
  M'=\frac{6ld}{5\lambda^2}\frac{\gamma}{\gamma-1}
  +\frac{\gamma}{\rho}\|x_1-x_*\|_2^2.
\end{equation*}
\end{theorem}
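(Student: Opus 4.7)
The plan is to combine the single-step contraction of Theorem~\ref{APP:thm:main1} with the imposed $\gamma$-bound to produce a per-block contraction, unroll it geometrically to control $\|x_{ks}-x_*\|_2^2$, and then lift the result to every iterate in the block via the $\gamma$-bound once more.

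First I would focus on a single block $\{x_{ks+1},\ldots,x_{(k+1)s}\}$ starting from $x_{ks}$. By hypothesis at least one step in this block satisfies the contraction inequality \eqref{APP:eq:main1}; I would arrange for this to be the step producing $x_{(k+1)s}$, which is legitimate because the $\gamma$-bound uniformly controls all preceding intermediate iterates. Combining this one-step contraction with the $\gamma$-bound $\|x_{(k+1)s-1}-x_*\|_2^2 \leqslant \gamma\|x_{ks}-x_*\|_2^2$ yields the per-block recursion
\begin{equation*}
  \|x_{(k+1)s}-x_*\|_2^2 \leqslant \rho^{(k+1)s-1}\rho_\lambda\frac{6ld}{5\lambda^2} + \rho_\lambda\gamma\,\|x_{ks}-x_*\|_2^2,
\end{equation*}
in which the hypothesis $\rho_\lambda\gamma<\rho$ turns the second term into a genuine geometric contraction at block scale.

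Next I would unroll this recursion starting from $x_1$. The initial-condition contribution collapses to at most $\rho^k\|x_1-x_*\|_2^2$ via $(\rho_\lambda\gamma)^k\leqslant\rho^k$, while the accumulated noise is a geometric series in $\rho_\lambda\gamma$ and $\rho^s$ that, using $\rho_\lambda\gamma\leqslant\rho$, can be bounded from above by $\rho^{k+1}/(\gamma-1)$ times $6ld/(5\lambda^2)$, producing the first displayed inequality. For the second inequality, I would apply the $\gamma$-bound inside the new block starting at $x_{ks}$ to get $\|x_{ks+j}-x_*\|_2^2 \leqslant \gamma\|x_{ks}-x_*\|_2^2$ for every $0\leqslant j\leqslant s$, substitute the bound just proved, and collect terms; the definition $M'=\frac{6ld}{5\lambda^2}\frac{\gamma}{\gamma-1}+\frac{\gamma}{\rho}\|x_1-x_*\|_2^2$ is exactly what is needed to package the result as $\rho^{k+1}M'$.

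The main obstacle I anticipate is the bookkeeping in the unrolling step, where the slack $\rho-\rho_\lambda\gamma$ provided by the assumption $\rho_\lambda\gamma<\rho$ must simultaneously absorb the fresh noise $\rho^{(k+1)s-1}\rho_\lambda$ added at each block and the noise inherited from earlier blocks. A related consistency check is that the bound $\|x_{ks+j}-x_*\|_2^2\leqslant\rho^{k+1}M'$ must remain compatible with the hypothesis $\|x_k-x_*\|_2^2\leqslant\rho^kM'$ required to legitimately invoke Theorem~\ref{APP:thm:main1} within every block; this is precisely why $M'$ carries the factors $\gamma/(\gamma-1)$ and $\gamma/\rho$, which absorb the $\gamma$ lost to the in-block $\gamma$-bound.
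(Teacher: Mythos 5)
Your overall strategy coincides with the paper's: place the contraction step (without loss of generality) at the end of each $s$-block, combine the one-step inequality \eqref{APP:eq:main1} with the bound $\|x_{(k+1)s-1}-x_*\|_2^2\leqslant\gamma\|x_{ks}-x_*\|_2^2$ to obtain a block-level recursion with ratio $\rho_\lambda\gamma<\rho$, unroll it geometrically, and then lift the bound to the in-block iterates via the $\gamma$-bound and the definition of $M'$, checking consistency with the hypothesis of Theorem \ref{APP:thm:main1}. All of that matches the paper's proof.

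The one substantive discrepancy is the exponent in your fresh-noise term $\rho^{(k+1)s-1}\rho_\lambda\frac{6ld}{5\lambda^2}$. This exponent would be correct only if the sampling variance $\rho^{j}\lambda^{-1}$ were shrunk at every iteration $j$, i.e., if Algorithm \ref{APP:alg:APP} were run literally. But the hypothesis ``the variance is reduced once after $s$ iterations'' means the variance is held \emph{fixed} within each block and contracted by $\rho$ only once per block --- this is precisely the device (announced before Theorem \ref{APP:thm:main2}) that prevents the per-iteration success probability $1-\tfrac{1}{C^2}$ from compounding to zero. Consequently the noise term produced by \eqref{APP:eq:main1} at the end of block $k$ carries $\rho^{k+1}$ (the block index), not $\rho^{(k+1)s-1}$; the paper's recursion reads $\|x_{(k+1)s}-x_*\|_2^2\leqslant\rho^{k+1}\rho_\lambda\frac{6ld}{5\lambda^2}+\rho_\lambda\gamma\|x_{ks}-x_*\|_2^2$. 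Your version uses a strictly smaller noise term than the hypotheses justify (for $s\geqslant 2$), so the recursion as written is not established, and your ``geometric series in $\rho_\lambda\gamma$ and $\rho^s$'' is not the series that actually arises; the correct series has ratio controlled by $\rho_\lambda\gamma/\rho<1$ and sums to the stated $\frac{\rho^{k+1}}{\gamma-1}\frac{6ld}{5\lambda^2}$. Replacing $(k+1)s-1$ by $k+1$ repairs the argument and reduces it to the paper's proof.
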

\begin{proof}
Without loss of generality, assume that for every $k\in\mathbb{N}_0$, in each $s$ successive iterates $\{x_{ks+j}\}_{j=1}^s$, only the last iteration $x_{(k+1)s}$ satisfies the contraction inequality \eqref{APP:eq:main1}, then
\begin{equation*}
  \|x_{s-1}-x_*\|_2^2\leqslant \gamma\|x_1-x_*\|_2^2\leqslant\rho M',
\end{equation*}
therefore, according to the contraction inequality \eqref{APP:eq:main1}, it follows that
\begin{equation*}
  \|x_s-x_*\|_2^2\leqslant
  \rho\rho_\lambda\frac{6ld}{5\lambda^2}+\rho_\lambda\|x_{s-1}-x_*\|_2^2
  <\frac{\rho^2}{\gamma}\frac{6ld}{5\lambda^2}+\rho\|x_1-x_*\|_2^2,
\end{equation*}
meanwhile, for every $1\leqslant j\leqslant s$, we also have
\begin{equation*}
  \|x_{s+j}-x_*\|_2^2\leqslant\gamma\|x_s-x_*\|_2^2
  \leqslant\rho^2\left(\frac{6ld}{5\lambda^2}
  +\frac{\gamma}{\rho}\|x_1-x_*\|_2^2\right)\leqslant\rho^2M'.
\end{equation*}
And similarly, we could get
\begin{equation*}
  \|x_{2s}-x_*\|_2^2\leqslant\frac{\rho^3}{\gamma}
  \frac{6ld}{5\lambda^2}\left(1+\frac{1}{\gamma}\right)
  +\rho^2\|x_1-x_*\|_2^2,
\end{equation*}
and for every $1\leqslant j\leqslant s$,
\begin{equation*}
  \|x_{2s+j}-x_*\|_2^2\leqslant\gamma\|x_s-x_*\|_2^2
  \leqslant\rho^3\left[\frac{6ld}{5\lambda^2}\left(1+\frac{1}{\gamma}\right)
  +\frac{\gamma}{\rho}\|x_1-x_*\|_2^2\right]\leqslant\rho^3M'.
\end{equation*}
Doing it recursively, one obtain that, for all $k\in\mathbb{N}$, the iterates of APP satisfy
\begin{align*}
  \|x_{ks}-x_*\|_2^2\leqslant&\frac{\rho^{k+1}}{\gamma}
  \frac{6ld}{5\lambda^2}\left(1+\frac{1}{\gamma}+
  \cdots+\frac{1}{\gamma^k}\right)+\rho^k\|x_1-x_*\|_2^2 \\
  \leqslant&\frac{\rho^{k+1}}{\gamma-1}
  \frac{6ld}{5\lambda^2}+\rho^k\|x_1-x_*\|_2^2,
\end{align*}
and for every $0\leqslant j\leqslant s$,
\begin{align*}
  \|x_{ks+j}-x_*\|_2^2\leqslant\rho^{k+1}M',
\end{align*}
and the proof is complete.\qed
\end{proof}

Since the number of function evaluations per-iteration, i.e., $n$, is a fixed number independent of $k$, the following corollary is immediate from Theorem \ref{APP:thm:main2}. It provides a total work complexity bound for the APP methods.
\begin{corollary}[Complexity bound]
Suppose the conditions of Theorem \ref{APP:thm:main2} hold. Then the
number of function evaluations of an APP (Algorithm \ref{APP:alg:APP}) required to achieve $\|x_k-x_*\|_2^2\leqslant\epsilon$ is $\mathcal{O}(\log(1/\epsilon))$.
\end{corollary}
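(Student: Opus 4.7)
The plan is to invoke Theorem \ref{APP:thm:main2} directly and read off the complexity by combining the per-iteration cost with the number of iterations required to reach accuracy $\epsilon$. Under the hypotheses of Theorem \ref{APP:thm:main2}, the iterates satisfy, in probability,
\begin{equation*}
  \|x_{k+1}-x_*\|_2^2 \leqslant \rho^{k+1} M
\end{equation*}
for every $k\in\mathbb{N}$, where $M$ is a constant depending only on the fixed problem data $l, L, d, \lambda, \rho_\lambda$ and on the initial gap $\|x_1-x_*\|_2^2$. Since $M$ is independent of the target accuracy $\epsilon$, the linear contraction immediately suggests the announced bound.

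First I would determine the iteration count $K(\epsilon)$ sufficient to guarantee $\|x_K-x_*\|_2^2\leqslant\epsilon$. From the inequality above, it suffices to require $\rho^{K}M\leqslant\epsilon$, which is equivalent to
\begin{equation*}
  K \geqslant \frac{\log(M/\epsilon)}{\log(1/\rho)}.
\end{equation*}
Since $0<\rho<1$ is a fixed parameter and $M$ is a fixed constant, this yields $K(\epsilon)=\mathcal{O}(\log(1/\epsilon))$.

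Next I would account for the per-iteration work. Inspecting Algorithm \ref{APP:alg:APP} together with the hypotheses of Theorem \ref{APP:thm:main2}, each iteration draws $n$ samples and performs $n$ function evaluations, where $n$ can be chosen once at the start to satisfy
\begin{equation*}
  n\geqslant \frac{C^2(L+\lambda)^d}{2^{d/2}\lambda^{d/2}L^{d/2}}
  \exp\!\Big(\tfrac{\lambda M}{2}\Big)\max\!\left\{1,\tfrac{2l^2}{5\lambda^2}\right\}.
\end{equation*}
Crucially, the right-hand side depends only on the fixed constants $l, L, d, \lambda, M, C$ and does \emph{not} depend on the iteration index $k$ or on the accuracy $\epsilon$. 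Hence $n$ is a fixed constant, and the total number of function evaluations after $K(\epsilon)$ iterations is
\begin{equation*}
  n\cdot K(\epsilon) = \mathcal{O}\!\left(\log\tfrac{1}{\epsilon}\right),
\end{equation*}
which is the claimed complexity bound.

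There is essentially no technical obstacle here: the work is a one-line manipulation of the geometric rate given by Theorem \ref{APP:thm:main2}, together with the observation that the per-iteration sample count $n$ is an $\epsilon$-independent constant. The only subtlety worth flagging explicitly is that the bound holds \emph{in probability}, inherited from the probabilistic guarantee of Theorem \ref{APP:thm:main2}; this does not alter the deterministic $\mathcal{O}(\log(1/\epsilon))$ count but should be stated so the reader does not expect a deterministic complexity result.
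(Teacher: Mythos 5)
Your proposal is correct and matches the paper's own (essentially one-line) justification: the paper likewise notes that $n$ is a fixed number independent of $k$, so the corollary is immediate from the geometric rate $\|x_{k+1}-x_*\|_2^2\leqslant\rho^{k+1}M$ of Theorem \ref{APP:thm:main2}. Your explicit computation of $K(\epsilon)$ and the remark about the probabilistic nature of the guarantee are faithful elaborations of the same argument.
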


\subsection{Stable algorithm}

Algorithm \ref{APP:alg:APP} has a hidden danger that the increase of $k$ might cause underflow. For example, assume the $f(\theta)\geqslant1$ for every $\theta\in\mathbb{R}^d$. If $\rho=0.9$, then
\begin{equation*}
  \exp(-\rho^{-k}f(\theta))\leqslant\exp(-\rho^{-k})=\exp(-0.9^{-k}).
\end{equation*}
And it is worth noting that
\begin{equation*}
  \exp(-0.9^{-62.29})=2.38389\times10^{-308}.
\end{equation*}
According to the IEEE $754$ double-precision floating-point standard, $\exp(-0.9^{-k})$ will be always stored as $0$ when $k\geqslant63$. Recall the APP iteration
\begin{equation*}
  x_{k+1}=\frac{\sum_{i=1}^n\theta_i \exp(-\rho^{-k}f(\theta_i))}{\sum_{i=1}^n \exp(-\rho^{-k}f(\theta_i))},
\end{equation*}
the iteration will terminate abnormally when $k=63$, because the denominator is zero. However, the current contraction ratio $0.9^{62}\approx0.0015$ is obviously not enough for most practical problems. To address this issue, we introduce the stable APP method as Algorithm \ref{APP:alg:SAPP} below. For this version, the second moment of function value sequence is used to avoid such underflows in each iteration.

\begin{algorithm}
\caption{Stable APP Method}
\label{APP:alg:SAPP}
\begin{algorithmic}[1]
\STATE{Choose an initial iterate $x_1$ and preset parameters $\lambda>0$, $\rho\in(0,1)$, $n\in\mathbb{N}$.}
\STATE{Set the current best $f_*^c=\infty$.}
\FOR{$k=1,2,\cdots$}
\STATE{Generate $n$ realizations $\{\theta_i\}_i^n$ of the random vector from $\mathcal{N}(x_k,\rho^{k}\lambda^{-1}I_d)$.}
\STATE{Compute function value sequence $\{f(\theta_i)\}_i^n$.}
\STATE{Update the current best $f_*^c=\min\{f_*^c,\min_{1\leqslant i\leqslant n}f(\theta_i)\}$.}
\STATE{Set $y_i=f(\theta_i)-f_*^c$ for $1\leqslant i\leqslant n$ and compute $\hat{m}_k^2=\frac{1}{n}\sum_{i=1}^ny_i^2$.}
\STATE{Set the new iterate as $x_{k+1}=\frac{\sum_{i=1}^n\theta_i \exp(-\hat{m}_k^{-1}y_i)}{\sum_{i=1}^n\exp(-\hat{m}_k^{-1}y_i)}$.}
\ENDFOR
\end{algorithmic}
\end{algorithm}

Let $m_k^2=\mathbb{E}[(f(\theta_i)-f_*)^2]$, then the stable APP iteration can be approximately written as
\begin{equation*}
  x_{k+1}=\frac{\sum_{i=1}^n\theta_i \exp[-m_k^{-1}(f(\theta_i)-f_*)]}{\sum_{i=1}^n \exp[-m_k^{-1}(f(\theta_i)-f_*)]},~~~
  \theta_i\sim\mathcal{N}(x_k,\rho^{k}\lambda^{-1}I_d),
\end{equation*}
which is an estimate for
\begin{equation*}
  e_{k+1}=\frac{\int_{\mathbb{R}^d}x~\exp\big[-\big(m_k^{-1}
  (f(x)-f_*)+\frac{\rho^{-k}\lambda}{2}\|x-x_k\|_2^2\big)\big]\ud x}
  {\int_{\mathbb{R}^d}\exp\big[-\big(m_k^{-1}(f(x)-f_*)
  +\frac{\rho^{-k}\lambda}{2}\|x-x_k\|_2^2\big)\big]\ud x}.
\end{equation*}
Notice that $\mathbb{V}[m_k^{-1}(f(\theta)-f_*)]=1$, the underflow mentioned above does not occur in this stable APP Method. As shown in Algorithm \ref{APP:alg:SAPP}, $f_*$ and $m_k^2$ can be replaced with corresponding estimates $f_*^c$ and $\hat{m}_k^2$, respectively.

We will see that, Theorem \ref{APP:thm:mk} establishes upper and lower bounds for $m_k^{-1}$, i.e., there are $0<c\leqslant C<\infty$ such that $c\rho^{-k}\leqslant m_k^{-1}\leqslant C\rho^{-k}$. And this yields
\begin{align*}
  e_{k+1}\leqslant\frac{\int_{\mathbb{R}^d}x~\exp\big[-\rho^{-k}
  \big(\frac{cl}{2}\|x-x_k\|_2^2
  +\frac{\lambda}{2}\|x-x_k\|_2^2\big)\big]\ud x}
  {\int_{\mathbb{R}^d}\exp\big[-\rho^{-k}
  \big(\frac{CL}{2}\|x-x_k\|_2^2
  +\frac{\lambda}{2}\|x-x_k\|_2^2\big)\big]\ud x},
\end{align*}
therefore, by making the substitution $l'=cl$ and $L'=CL$, Jensen's inequality gives
\begin{equation*}
  \|e_{k+1}-x_*\|\leqslant\frac{\int_{\mathbb{R}^d}
  \|x-x_*\|_2^2~\exp\big[-\rho^{-k}
  \big(\frac{l'}{2}\|x-x_k\|_2^2
  +\frac{\lambda}{2}\|x-x_k\|_2^2\big)\big]\ud x}
  {\int_{\mathbb{R}^d}\exp\big[-\rho^{-k}
  \big(\frac{L'}{2}\|x-x_k\|_2^2
  +\frac{\lambda}{2}\|x-x_k\|_2^2\big)\big]\ud x}.
\end{equation*}
Hence, recall \eqref{APP:eq:IkRB} and \eqref{APP:eq:I2Nt4}, the analyses of Algorithm \ref{APP:alg:APP} can be directly applied to Algorithm \ref{APP:alg:SAPP}.

\begin{theorem}
\label{APP:thm:mk}
Under Assumption \ref{APP:ass:A}, suppose that the stable APP method (Algorithm \ref{APP:alg:SAPP}) is run with a regularization parameter $\lambda>0$, a contraction factor $0<\rho<1$ and $n\in\mathbb{N}$ such that $\|x_k-x_*\|_2^2\leqslant\rho^kM$ for a fixed $M>0$ and $k\in\mathbb{N}$. Let
\begin{equation*}
  m_k^2=\left(\frac{\rho^{-k}\lambda}{2\pi}\right)^{\frac{d}{2}}
  \int_{\mathbb{R}^d}\big(f(x)-f_*\big)^2\exp\Big(-
  \frac{\rho^{-k}\lambda}{2}\|x-x_k\|_2^2\Big)\ud x,
\end{equation*}
then $m_k^{-1}$ has the following upper and lower bounds:
\begin{equation*}
  c\rho^{-k}\leqslant m_k^{-1}\leqslant C\rho^{-k},
\end{equation*}
where $c=\frac{2\lambda}{Ld\sqrt{3+6M\lambda+M^2\lambda^2}}$ and $C=\frac{2\lambda}{l\sqrt{3d}}$.
\end{theorem}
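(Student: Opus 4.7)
The plan is to reduce the evaluation of $m_k^2$ to a Gaussian fourth moment by sandwiching $(f(x)-f_*)^2$ via Assumption~\ref{APP:ass:A}. From $\frac{l}{2}\|x-x_*\|_2^2 \leqslant f(x)-f_* \leqslant \frac{L}{2}\|x-x_*\|_2^2$ we immediately obtain
\begin{equation*}
  \frac{l^2}{4}\mathbb{E}_\theta\big[\|\theta-x_*\|_2^4\big]
  \;\leqslant\; m_k^2 \;\leqslant\;
  \frac{L^2}{4}\mathbb{E}_\theta\big[\|\theta-x_*\|_2^4\big],
\end{equation*}
where $\theta\sim\mathcal{N}(x_k,\sigma^2 I_d)$ with $\sigma^2=\rho^k/\lambda$. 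Everything then reduces to computing this fourth moment, after which I would plug in the assumption $\|x_k-x_*\|_2^2\leqslant\rho^kM$ and simplify to the stated form.

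To evaluate the fourth moment, I would write $\theta-x_*=\eta+a$ with $\eta\sim\mathcal{N}(0,\sigma^2 I_d)$ and $a=x_k-x_*$, and expand
\begin{equation*}
  \|\theta-x_*\|_2^4
  =\big(\|\eta\|_2^2+2\eta^\top a+\|a\|_2^2\big)^2.
\end{equation*}
All odd-order terms in $\eta$ vanish in expectation (i.e.\ $\mathbb{E}[\eta^\top a]=0$ and $\mathbb{E}[\|\eta\|_2^2\,\eta^\top a]=0$), and the standard identities $\mathbb{E}[\|\eta\|_2^2]=\sigma^2 d$, $\mathbb{E}[\|\eta\|_2^4]=\sigma^4 d(d+2)$, $\mathbb{E}[(\eta^\top a)^2]=\sigma^2\|a\|_2^2$ collapse the expansion to
\begin{equation*}
  \mathbb{E}_\theta\big[\|\theta-x_*\|_2^4\big]
  =\sigma^4 d(d+2)+2\sigma^2(d+2)\|a\|_2^2+\|a\|_2^4.
\end{equation*}

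For the upper bound on $m_k^{-1}$ (i.e.\ the lower bound on $m_k^2$), I would discard the non-negative middle and last terms and use $d(d+2)\geqslant 3d$ for $d\geqslant 1$, giving $m_k^2\geqslant\frac{3l^2 d\rho^{2k}}{4\lambda^2}$, hence $m_k^{-1}\leqslant\frac{2\lambda\rho^{-k}}{l\sqrt{3d}}=C\rho^{-k}$. For the lower bound on $m_k^{-1}$, I would substitute $\|a\|_2^2\leqslant\rho^k M$, factor out $\rho^{2k}/\lambda^2$, and then apply the crude estimates $d(d+2)\leqslant 3d^2$, $2M\lambda(d+2)\leqslant 6M\lambda d^2$, $M^2\lambda^2\leqslant M^2\lambda^2 d^2$ (each valid for $d\geqslant 1$) to combine everything under a single $d^2$, yielding $m_k^2\leqslant \frac{L^2 d^2\rho^{2k}(3+6M\lambda+M^2\lambda^2)}{4\lambda^2}$ and therefore $m_k^{-1}\geqslant c\rho^{-k}$.

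The only real obstacle is bookkeeping: one must be careful that the quadratic cross-term in $\eta$ contributes $\sigma^2\|a\|_2^2$ (not $\sigma^2\|a\|_2^4$ or similar), and that the final loose bounding by $d^2(3+6M\lambda+M^2\lambda^2)$ is arranged to match exactly the constant $c$ advertised in the statement. No delicate analysis is required beyond this—the Gaussian moments handle everything, and the hypothesis $\|x_k-x_*\|_2^2\leqslant\rho^k M$ is used in exactly one place (for the upper bound on $m_k^2$), while the lower bound holds without it.
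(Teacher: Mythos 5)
Your proposal is correct and lands on exactly the stated constants $c$ and $C$; the only divergence from the paper is in how the Gaussian fourth moment is handled. The paper never computes $\mathbb{E}\bigl[\|\theta-x_*\|_2^4\bigr]$ exactly: it first sandwiches $\|x-x_*\|_2^4=\bigl(\sum_i(x^{(i)}-x_*^{(i)})^2\bigr)^2$ between $\sum_i(x^{(i)}-x_*^{(i)})^4$ and $d\sum_i(x^{(i)}-x_*^{(i)})^4$ (dropping cross terms on one side, root-mean-square--arithmetic-mean on the other), and then evaluates the one-dimensional fourth moments $3\sigma^4+6\sigma^2\mu_i^2+\mu_i^4$ coordinate by coordinate, which is where the factor $d^2$ and the polynomial $3+6M\lambda+M^2\lambda^2$ emerge. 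You instead compute the exact vector moment $\sigma^4d(d+2)+2\sigma^2(d+2)\|a\|_2^2+\|a\|_2^4$ and only then loosen via $d(d+2)\leqslant 3d^2$, $2(d+2)\leqslant 6d^2$, $1\leqslant d^2$ to recover the same constants. Your intermediate expression is sharper (the paper's coordinate-wise sandwich is already lossy before any moment is computed), but after the final crude bounding both routes give identical conclusions; your observation that the hypothesis $\|x_k-x_*\|_2^2\leqslant\rho^kM$ enters only the upper bound on $m_k^2$ (equivalently the lower bound $c\rho^{-k}\leqslant m_k^{-1}$) also holds for the paper's argument. The bookkeeping you flag checks out: $\mathbb{E}[(\eta^\top a)^2]=\sigma^2\|a\|_2^2$ contributes $4\sigma^2\|a\|_2^2$ and the term $2\|\eta\|_2^2\|a\|_2^2$ contributes $2\sigma^2d\|a\|_2^2$, which combine to the $2\sigma^2(d+2)\|a\|_2^2$ you state.
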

\begin{proof}
From Assumption \ref{APP:ass:A}, we have
\begin{equation*}
  \frac{l^2}{4}\|x-x_*\|_2^4\leqslant
  \big(f(x)-f_*\big)^2\leqslant\frac{L^2}{4}\|x-x_*\|_2^4,
\end{equation*}
together with the definition of $m_k^2$, i.e., $m_k^2=\mathbb{E}[(f(x)-f_*)^2]$, it follows that
\begin{equation*}
  \frac{l^2}{4}\mathbb{E}\big[\|x-x_*\|_2^4\big]\leqslant
  m_k^2\leqslant\frac{L^2}{4}\mathbb{E}\big[\|x-x_*\|_2^4\big],
  ~~\textrm{where}~~x\sim\mathcal{N}(x_k,\rho^{k}\lambda^{-1}I_d).
\end{equation*}
Let $a^{(i)}$ be the $i$th component of a vector $a\in\mathbb{R}^d$. Notice that
\begin{equation*}
  \|x-x_*\|_2^4=\bigg(\sum_{i=1}^d
  \big(x^{(i)}-x_*^{(i)}\big)^2\bigg)^2,
\end{equation*}
we obtain
\begin{equation*}
  \|x-x_*\|_2^4=\sum_{i=1}^d
  \big(x^{(i)}-x_*^{(i)}\big)^4+\sum_{i\neq j}
  \big(x^{(i)}-x_*^{(i)}\big)^2\big(x^{(j)}-x_*^{(j)}\big)^2
  \geqslant\sum_{i=1}^d\big(x^{(i)}-x_*^{(i)}\big)^4
\end{equation*}
and
\begin{equation*}
  \|x-x_*\|_2^4\leqslant d\sum_{i=1}^d\big(x^{(i)}-x_*^{(i)}\big)^4,
\end{equation*}
the last inequality comes directly from the famous Root Mean Square Arithmetic Mean inequality; therefore, we obtain
\begin{equation}\label{APP:eq:mkA}
  \frac{l^2}{4}\sum_{i=1}^d\mathbb{E}
  \Big[\big(x^{(i)}-x_*^{(i)}\big)^4\Big]
  \leqslant m_k^2\leqslant
  \frac{L^2d}{4}\sum_{i=1}^d\mathbb{E}
  \Big[\big(x^{(i)}-x_*^{(i)}\big)^4\Big].
\end{equation}

Further, by noting that
\begin{align*}
  &\mathbb{E}\big(x^{(i)}-x_*^{(i)}\big)^4 \\
  =&\left(\frac{\rho^{-k}\lambda}{2\pi}\right)^{\frac{d}{2}}
  \int_{\mathbb{R}^d}\big(x^{(i)}-x_*^{(i)}\big)^4\exp
  \Big(-\frac{\rho^{-k}\lambda}{2}\|x-x_k\|_2^2\Big)\ud x \\
  =&\left(\frac{\rho^{-k}\lambda}{2\pi}\right)^{\frac{1}{2}}
  \int_{\mathbb{R}^d}\Big[\big(x^{(i)}-x_k^{(i)}\big)+
  \big(x_k^{(i)}-x_*^{(i)}\big)\Big]^4
  \exp\Big(-\frac{\rho^{-k}\lambda}{2}
  \|x^{(i)}-x_k^{(i)}\|_2^2\Big)\ud x^{(i)} \\
  =&\left(\frac{3\rho^{2k}}{\lambda^2}
  +\frac{6\rho^k}{\lambda}\big(x_k^{(i)}-x_*^{(i)}\big)^2
  +\big(x_k^{(i)}-x_*^{(i)}\big)^4\right)
\end{align*}
and
\begin{equation*}
  0\leqslant\big(x_k^{(i)}-x_*^{(i)}\big)^2\leqslant
  \|x_k-x_*\|_2^2\leqslant\rho^kM,
\end{equation*}
it follows that
\begin{equation}\label{APP:eq:mkB}
  \frac{3\rho^{2k}}{\lambda^2}\leqslant
  \mathbb{E}\big(x^{(i)}-x_*^{(i)}\big)^4\leqslant
  \frac{(3+6M\lambda+M^2\lambda^2)\rho^{2k}}{\lambda^2}.
\end{equation}

Finally, it follows from \eqref{APP:eq:mkA} and \eqref{APP:eq:mkB} that
\begin{equation*}
  \frac{3l^2d}{4\lambda^2}\rho^{2k}\leqslant m_k^2\leqslant
  \frac{L^2d^2(3+6M\lambda+M^2\lambda^2)}{4\lambda^2}\rho^{2k},
\end{equation*}
and the proof is complete. \qed
\end{proof}

\section{Numerical experiments}
\label{APP:s4}

Now we illustrate the numerical performance of the algorithm. We mainly consider the revised Rastrigin function in $\mathbb{R}^d$ defined as
\begin{equation*}
  f(x)=\log\bigg(\|x\|_2^2-\frac{1}{2}\sum_{i=1}^d\cos\big(5\pi x^{(i)}\big)
  +\frac{d}{2}+\frac{1}{10}\bigg)-\log\bigg(\frac{1}{10}\bigg),
\end{equation*}
where $x^{(i)}$ be the $i$th component of $x$. As mentioned before, it is very similar to the funnel-shaped function shown in Fig. \ref{APP:fig:PF} (left) and satisfies Assumption \ref{APP:ass:A} within an appropriate range. It is worth noting that, although $\exp(f)$ is separable, we obviously did not make use of its separability, because the actual folding energy landscape may not be separable.

Clearly, the function above has a unique global minima located at the origin and huge local minima. Only in the hypercube $[-1,1]^d$, the number of its local minima reaches $5^d$, e.g., about $3.055\times10^{349}$ for $d=500$. This number is roughly the same as the number of possible conformations of a protein composed of hundreds of amino acids. In the following experiments, every initial iterate is selected on a $d$-dimensional sphere of radius $\sqrt{d}$ centered at the origin. Therefore, finding the global minima is extremely difficult for a large $d$.

\subsection{Performances of APP in various dimensions from $2$ to $500$}

In experiments below, the random vectors in each iteration are generated by a halton sequence with RR$2$ scramble type \cite{KocisL1997A_QMCscramble}. And each point in the sequence will only be used once, that is, each point will not be used repeatedly.

Fig. \ref{APP:fig:1} shows the intuitive convergence behavior of the APP algorithm for the revised Rastrigin function in $2$ dimensions. One can see that the APP algorithm not only guarantees global linear convergence, but also tends to converge directly towards the global minimizer, despite the existence of numerous local minima.

\begin{figure}[tbhp]
\centering
\subfigure{\includegraphics[width=0.325\textwidth]{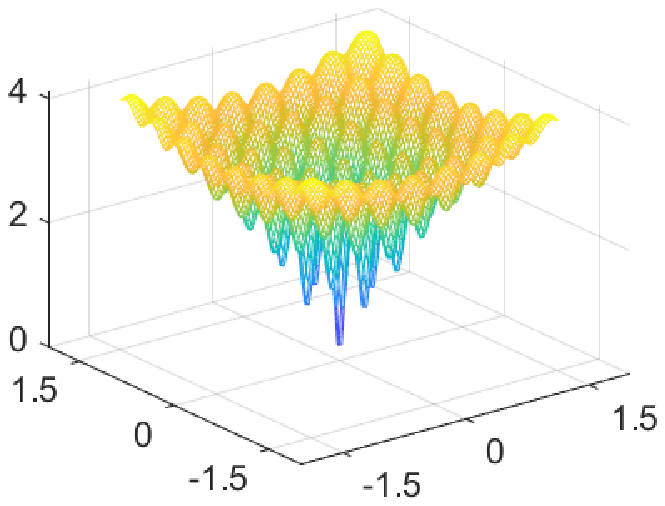}}
\subfigure{\includegraphics[width=0.325\textwidth]{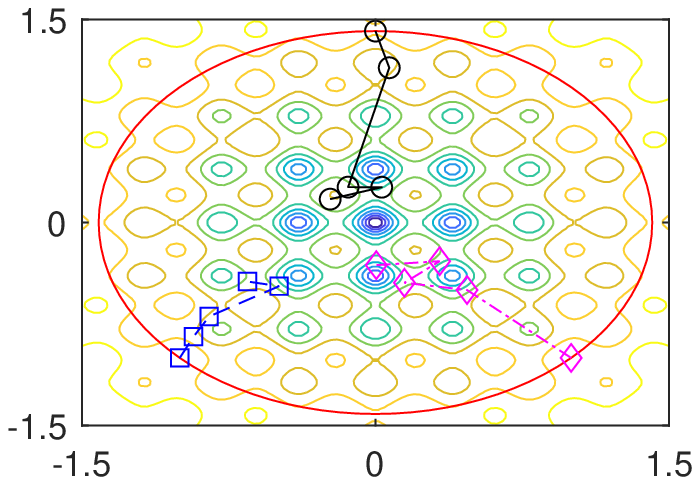}}
\subfigure{\includegraphics[width=0.325\textwidth]{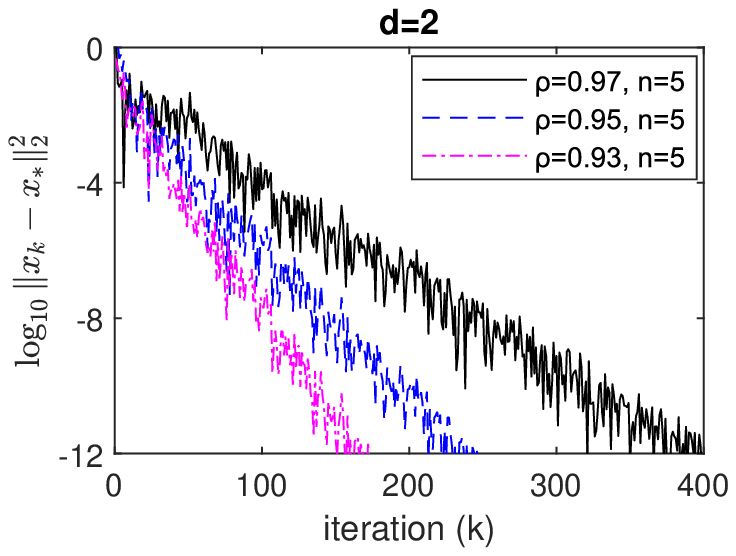}}
\caption{Performance of APP method for the revised Rastrigin function in $2$ dimensions, three independent initial iterates $(0,\sqrt{2})$ and $(\pm1,-1)$ are selected on a sphere of radius $\sqrt{2}$ centered at the origin, and the parameter $\lambda=1/\sqrt{2}$. Left: the objective function. Middle: the first $5$ iterates for each independent run. Right: global convergence behavior for each independent run with relevant parameter setting.}
\label{APP:fig:1}
\end{figure}

\begin{figure}[tbhp]
\centering
\subfigure{\includegraphics[width=0.325\textwidth]{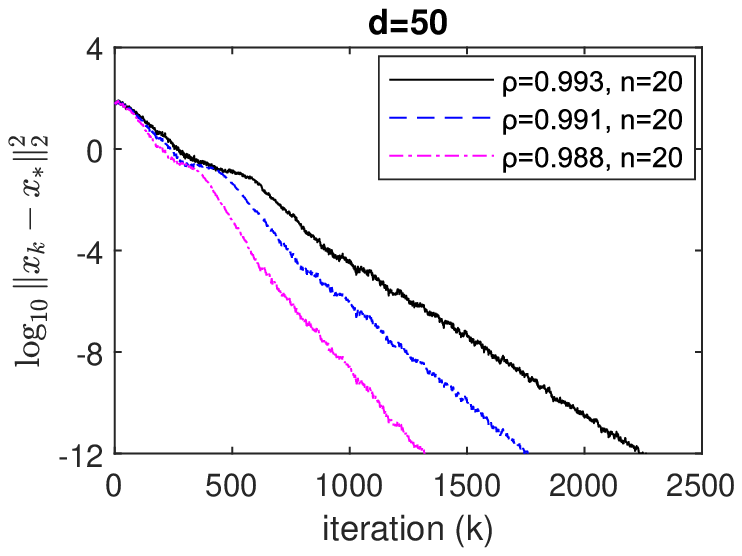}}
\subfigure{\includegraphics[width=0.325\textwidth]{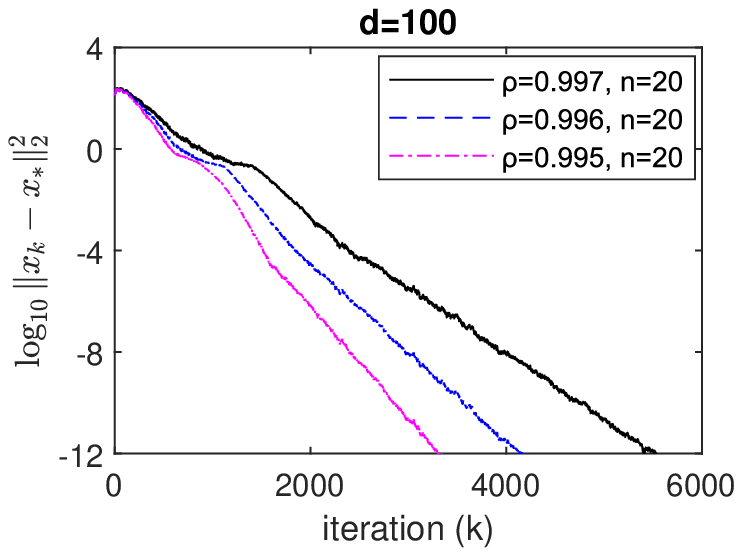}}
\subfigure{\includegraphics[width=0.325\textwidth]{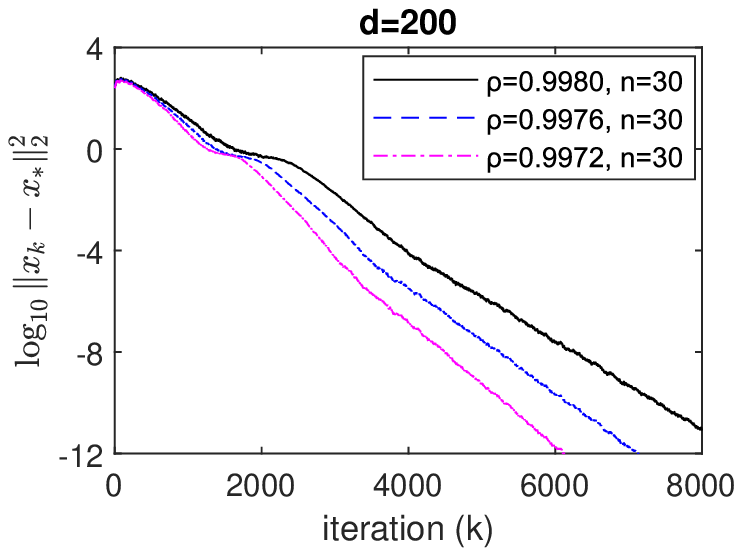}}
\subfigure{\includegraphics[width=0.325\textwidth]{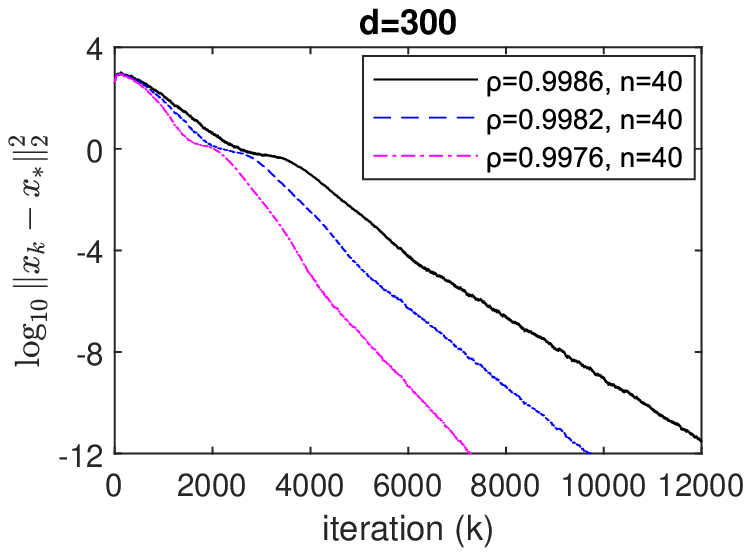}}
\subfigure{\includegraphics[width=0.325\textwidth]{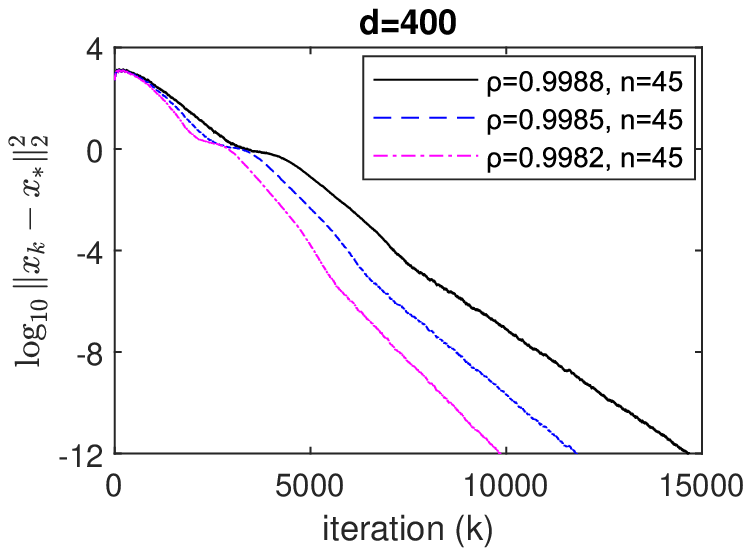}}
\subfigure{\includegraphics[width=0.325\textwidth]{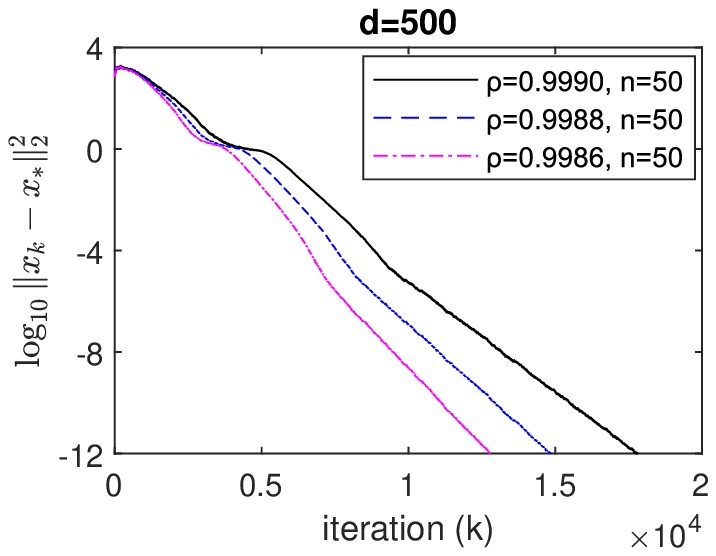}}
\caption{Performances of APP method for the revised Rastrigin function in various dimensions, every initial iterate is randomly selected on a sphere of radius $\sqrt{d}$ centered at the origin, the parameter $\lambda=1/\sqrt{d}$, three different settings for the parameters $\rho$ and $n$ are run independently for each plot.}
\label{APP:fig:2}
\end{figure}

Fig. \ref{APP:fig:2} shows the performance of the APP algorithm in various dimensions from $50$ to $500$. These experiments further demonstrate global linear convergence. The oscillation of error in Fig. \ref{APP:fig:2} may be related to the mismatch between the fixed preset parameters and the local characteristics of the function. Moreover, one does not need a large $n$ to guarantee convergence.

\subsection{Comparisons of APP and DE in various dimensions from $50$ to $500$}

Differential Evolution (DE) is a simple and effective evolutionary
algorithm widely used to solve global optimization problems in a continuous
domain \cite{OparaK2019R_DE}, and it proved to be the fastest evolutionary algorithm among most entries in the First International IEEE Competition on Evolutionary Optimization \cite{StornR1997M_DifferentialEvolution}. There are different variants and we follow the standard convention to classify them by using the notation
\begin{equation*}
  DE/X/Y/Z,
\end{equation*}
where $X$ is the choice of base vector which can typically be ``rand'' or ``best'', $Y$ is the number of difference vectors used, and $Z$ denotes the crossover scheme which can usually be ``bin'' or ``exp''. The most common variant is binomial crossover, denoted as $DE/X/Y/bin$ \cite{OparaK2019R_DE}, and we will use $DE/rand/1/bin$ for our comparisons, because $DE/best/Y/Z$ is not suitable for problems with huge local minima, due to its greedy mechanism.

Comparisons of APP and DE are shown in the left plots of Fig. \ref{APP:fig:3}. The choice of DE's parameters follows the existing experiences \cite{OparaK2019R_DE,StornR1997M_DifferentialEvolution,Yang2014M_Opt} and has been fine-tuned as shown in the middle and right plots of each row of Fig. \ref{APP:fig:3}. These comparisons reflect the advantages of APP in problems satisfied our assumption. 

\begin{figure}[tbhp]
\centering
\subfigure{\includegraphics[width=0.325\textwidth]{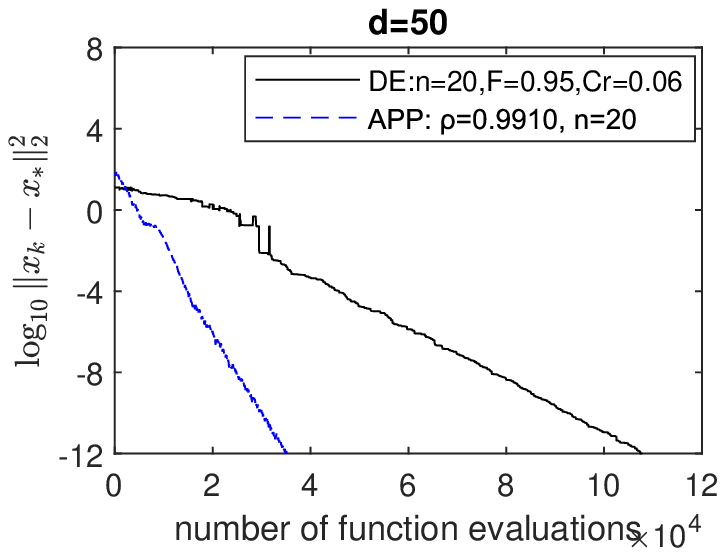}}
\subfigure{\includegraphics[width=0.325\textwidth]{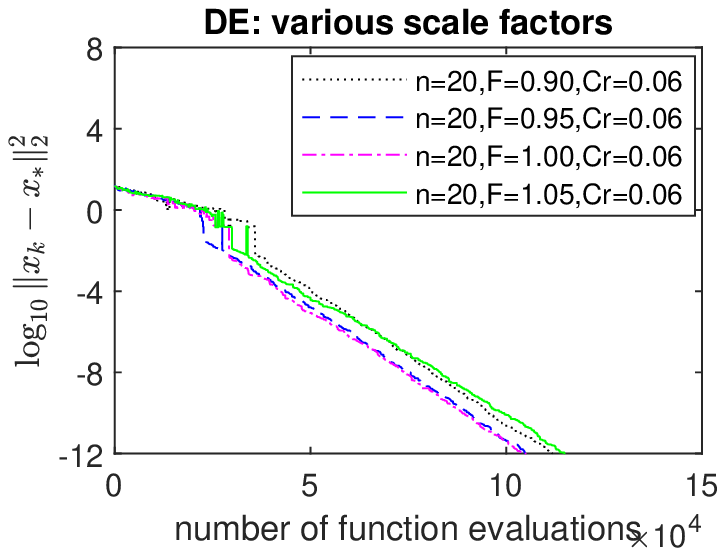}}
\subfigure{\includegraphics[width=0.325\textwidth]{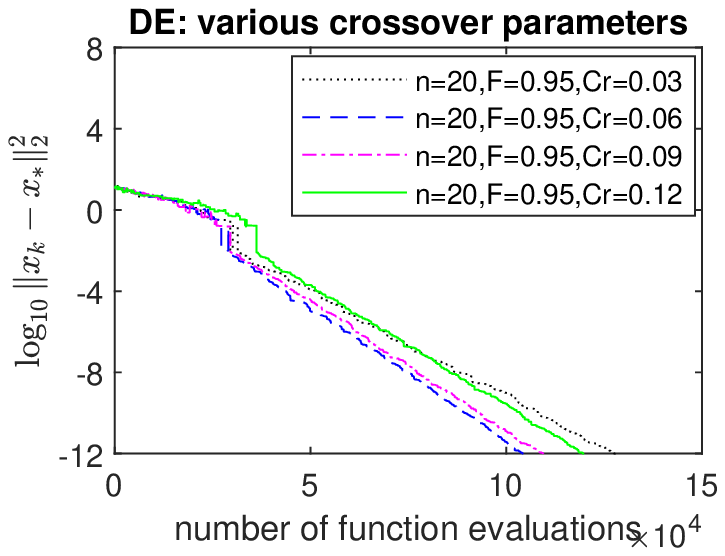}}
\subfigure{\includegraphics[width=0.325\textwidth]{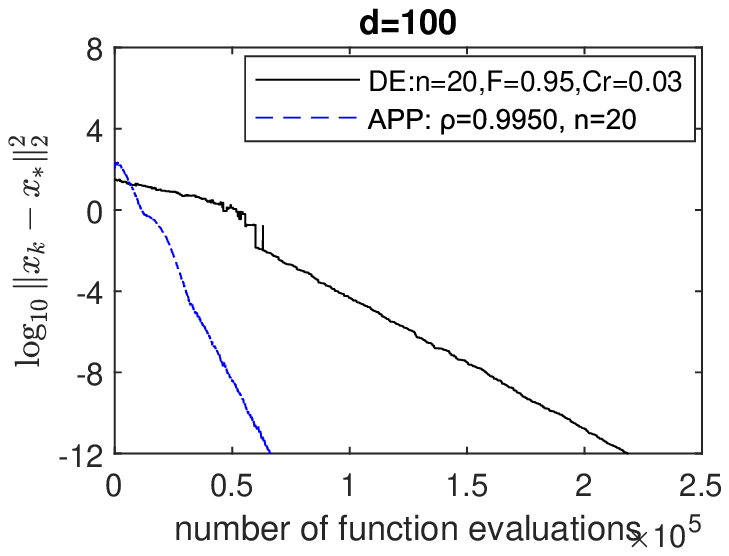}}
\subfigure{\includegraphics[width=0.325\textwidth]{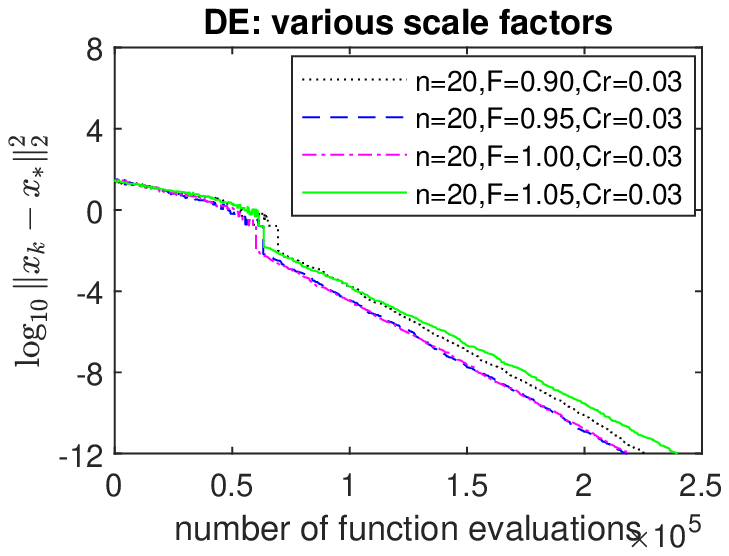}}
\subfigure{\includegraphics[width=0.325\textwidth]{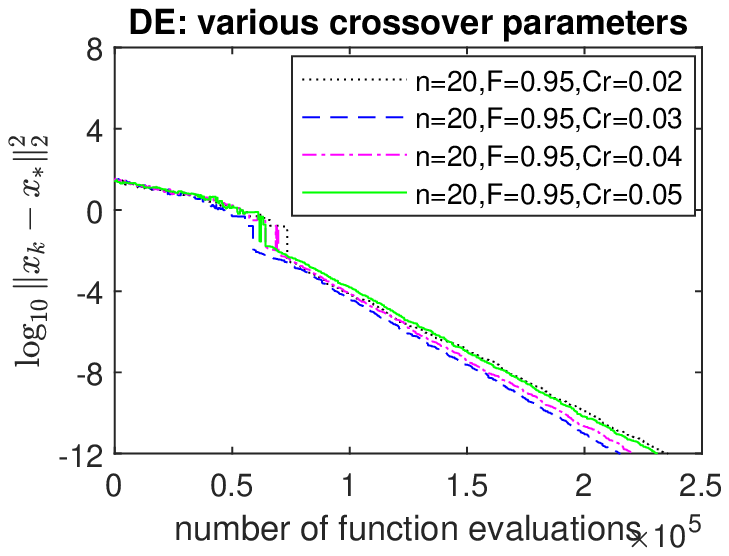}}
\subfigure{\includegraphics[width=0.325\textwidth]{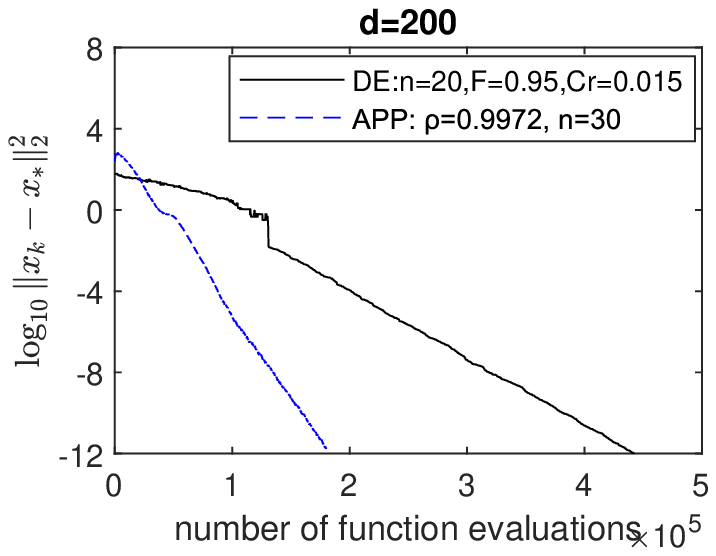}}
\subfigure{\includegraphics[width=0.325\textwidth]{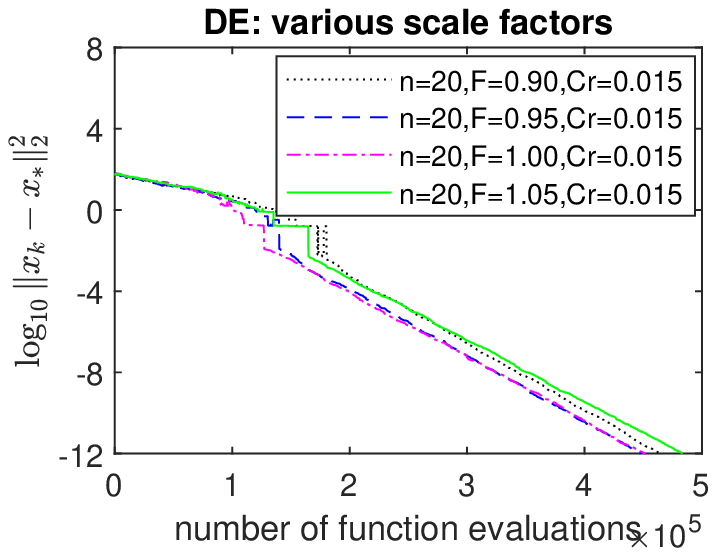}}
\subfigure{\includegraphics[width=0.325\textwidth]{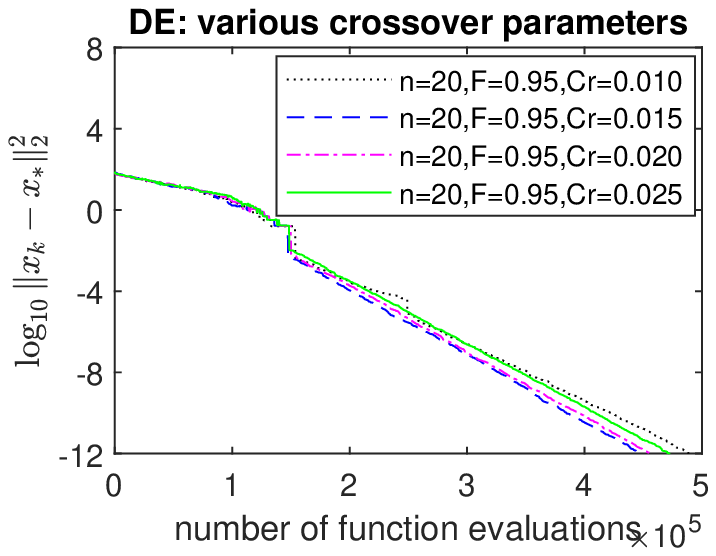}}
\subfigure{\includegraphics[width=0.325\textwidth]{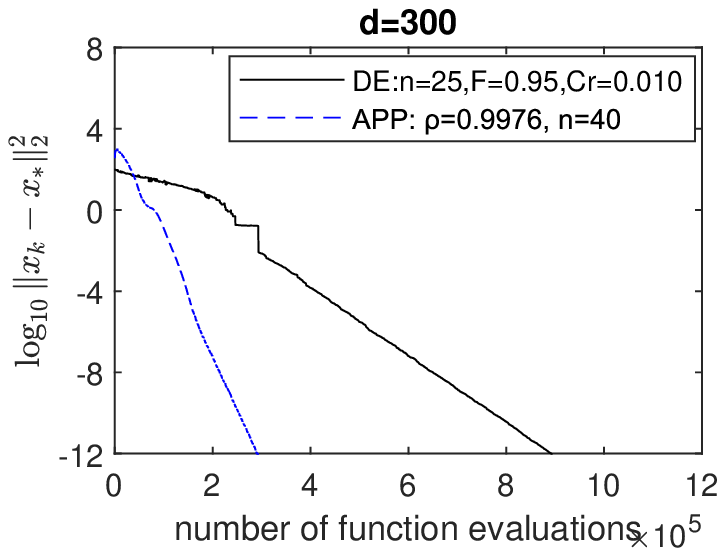}}
\subfigure{\includegraphics[width=0.325\textwidth]{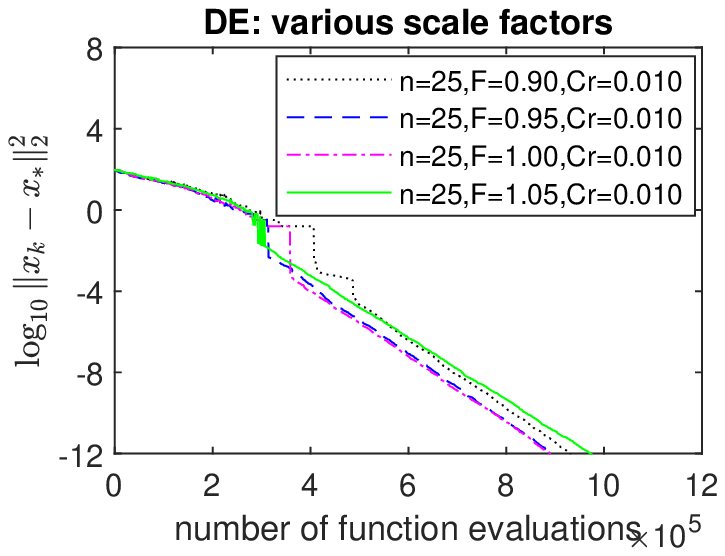}}
\subfigure{\includegraphics[width=0.325\textwidth]{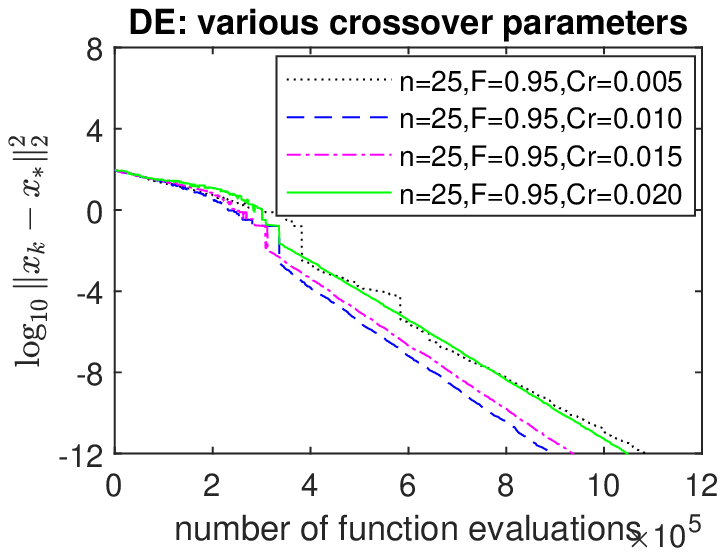}}
\subfigure{\includegraphics[width=0.325\textwidth]{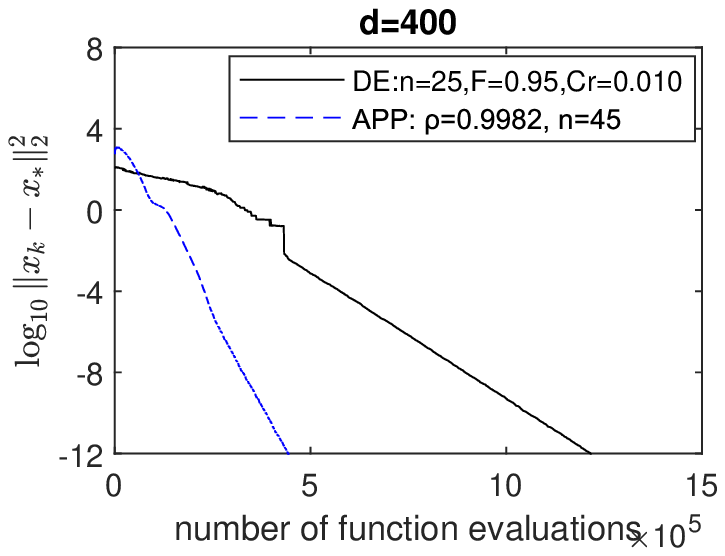}}
\subfigure{\includegraphics[width=0.325\textwidth]{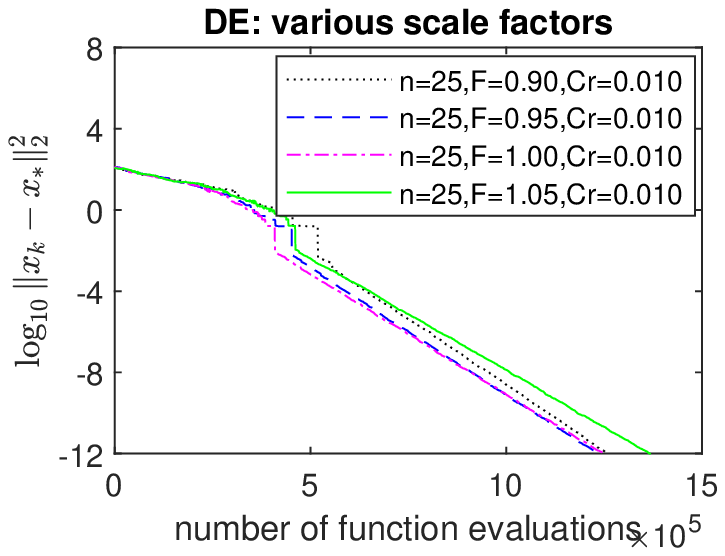}}
\subfigure{\includegraphics[width=0.325\textwidth]{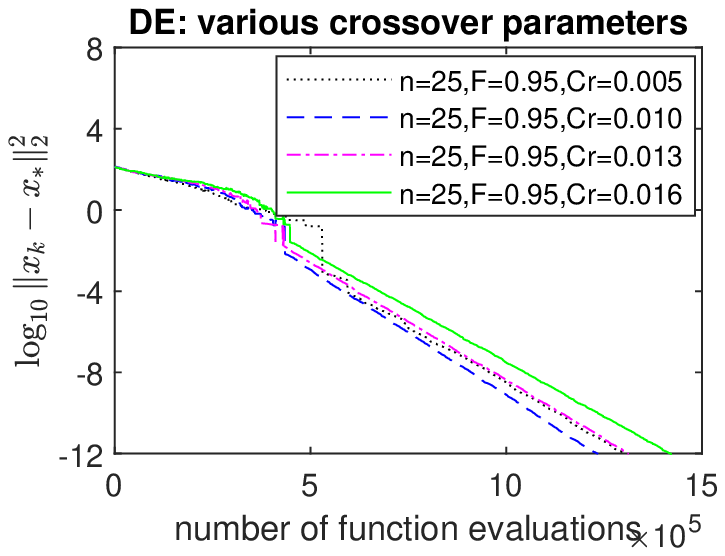}}
\subfigure{\includegraphics[width=0.325\textwidth]{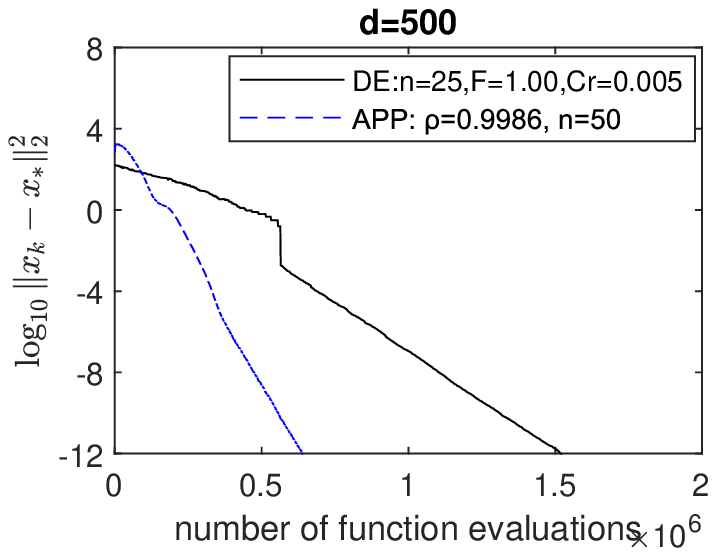}}
\subfigure{\includegraphics[width=0.325\textwidth]{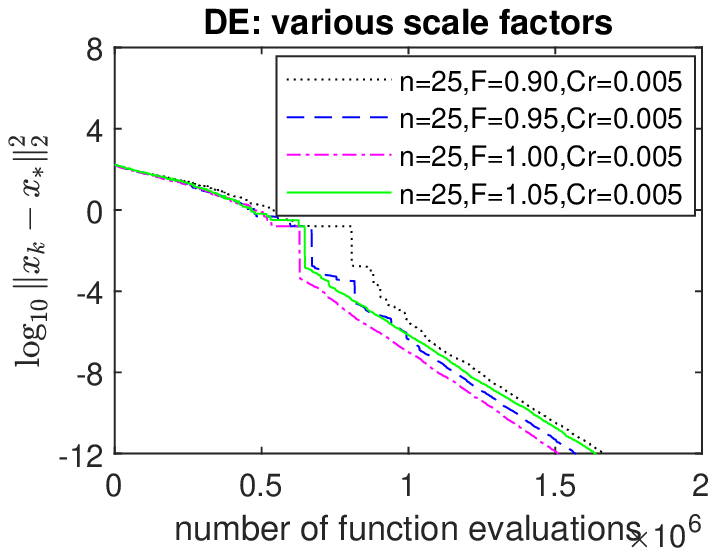}}
\subfigure{\includegraphics[width=0.325\textwidth]{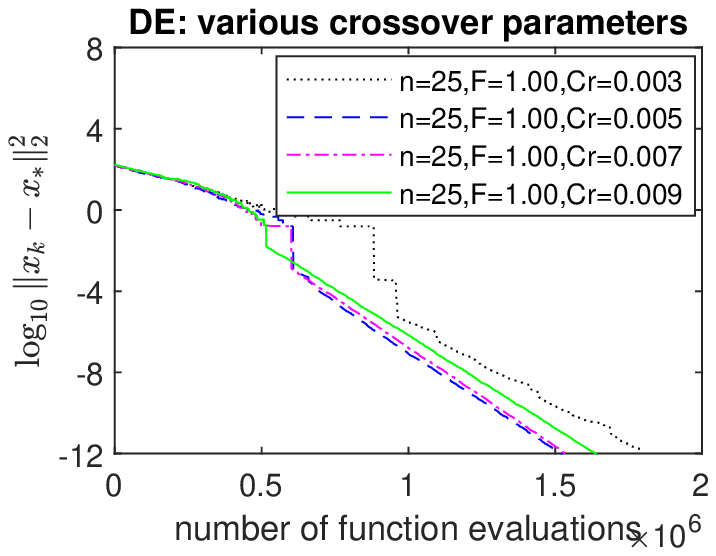}}
\caption{Comparisons of APP and DE for the revised Rastrigin function in various dimensions. For each APP iterative sequence, every initial iterate is randomly selected on a sphere of radius $\sqrt{d}$ centered at the origin, the parameter $\lambda=1/\sqrt{d}$. The search domain of DE is set to a hypercube $[-1,1]^d$. Since the search domain of APP can be viewed as a ball with radius $3\sqrt{d}$, the problems handled by the APP is actually \emph{more difficult} than those handled by the DE. For every comparison shown in the left plots of each row, the parameter values of DE are fine-tuned, as shown in the middle and right plots of each row.}
\label{APP:fig:3}
\end{figure}

\section{Conclusions}
\label{APP:s5}

In this work, we have established an asymptotic representation formula of nonconvex proximal points and have proposed asymptotic proximal point methods for finding the global minima of a class of multiple minima functions. These derivative-free methods have a total work complexity bound $\mathcal{O}(\log\frac{1}{\epsilon})$ to find a point such that the optimality gap between this point and the global minimizer is less than $\epsilon$. Numerical experiments and comparisons demonstrate both the global linear convergence and the logarithmic work complexity of the proposed method.

The algorithm is implemented in Matlab. The source code of the implementation is available at https://github.com/xiaopengluo/app.

Future research is currently being conducted in the following areas. One of the attempts is to establish an adaptive selection strategy for parameters. The empirical choice of parameters depends on a number of comparative experiments, this requires a lot of computational cost. A successful achievement will make our methods more suitable for large-scale applications, meanwhile, it also helps to reduce the length of the inner cycle, i.e., $n$, as much as possible.

Second, we are considering how to extend the assumption of our methods without significantly increasing the computational cost. It is very valuable to efficiently find the best local minima in a certain range for a general nonconvex problem. And our methods increase the possibility of achieving this purpose.

Third, we also hope to investigate further properties of the proposed asymptotic formula. Our work obviously relies on some interesting properties of this formula. It is the key to transform from the differential viewpoint to the integral viewpoint. And further exploration may lead to other ideas for essential nonconvex and nonsmooth optimization problems.

\begin{acknowledgements}
We thank Prof. Jong-Shi Pang for his helpful comment on the previous choice of the parameter $\alpha_k$. 
\end{acknowledgements}


\appendix

\section*{Appendix A}
\label{APP:apd:A}

\begin{proof}[Lemma \ref{APP:lem:2int}]
Let $a^{(i)}$ be the $i$th component of a vector $a\in\mathbb{R}^d$, then for any $1\leqslant i\leqslant d$, when $\alpha(\beta+\gamma)>0$, one obtains
\begin{align*}
  I_1^{(i)}:=&\int_{\mathbb{R}}\exp\left[-\alpha\left(\frac{\beta}{2}
  \big(x^{(i)}-u^{(i)}\big)^2+\frac{\gamma}{2}
  \big(x^{(i)}-v^{(i)}\big)^2\right)\right]\ud x^{(i)} \\
  =&\int_{\mathbb{R}}\exp\left[-\frac{\alpha(\beta+\gamma)}{2}
  \left(x^{(i)}-\frac{\beta u^{(i)}+\gamma v^{(i)}}
  {\beta+\gamma}\right)^2-\frac{\alpha\beta\gamma
  (u^{(i)}-v^{(i)})^2}{2(\beta+\gamma)}\right]\ud x^{(i)} \\
  =&\exp\left(-\frac{\alpha\beta\gamma
  (u^{(i)}-v^{(i)})^2}{2(\beta+\gamma)}\right)
  \int_{\mathbb{R}}\exp\left[-\frac{\alpha(\beta+\gamma)}{2}
  \left(x^{(i)}-\frac{\beta u^{(i)}+\gamma v^{(i)}}
  {\beta+\gamma}\right)^2\right]\ud x^{(i)},
\end{align*}
using the substitution
\begin{equation*}
  t=x^{(i)}-\frac{\beta u^{(i)}+\gamma v^{(i)}}{\beta+\gamma},
\end{equation*}
this yields
\begin{align*}
  I_1^{(i)}=&\exp\left(-\frac{\alpha\beta\gamma
  (u^{(i)}\!-\!v^{(i)})^2}{2(\beta+\gamma)}\right)\int_{\mathbb{R}}
  \exp\left[-\frac{\alpha(\beta+\gamma)}{2}t^2\right]\ud t \\
  =&\exp\left(-\frac{\alpha\beta\gamma
  (u^{(i)}-v^{(i)})^2}{2(\beta+\gamma)}\right)
  \left(\frac{2\pi}{\alpha(\beta+\gamma)}\right)^{\frac{1}{2}};
\end{align*}
and similarly,
\begin{align*}
  I_2^{(i)}:=&\int_{\mathbb{R}}\big(x^{(i)}-u^{(i)}\big)^2
  \exp\left[-\alpha\left(\frac{\beta}{2}
  \big(x^{(i)}-u^{(i)}\big)^2+\frac{\gamma}{2}
  \big(x^{(i)}-v^{(i)}\big)^2\right)\right]\ud x^{(i)} \\
  =&\exp\left(-\frac{\alpha\beta\gamma
  (u^{(i)}\!-\!v^{(i)})^2}{2(\beta+\gamma)}\right)\int_{\mathbb{R}}
  \left(t+\frac{\gamma(v^{(i)}-u^{(i)})}{\beta+\gamma}\right)^2
  \exp\left[-\frac{\alpha(\beta+\gamma)}{2}t^2\right]\ud t \\
  =&\exp\left(-\frac{\alpha\beta\gamma
  (u^{(i)}\!-\!v^{(i)})^2}{2(\beta+\gamma)}\right)\int_{\mathbb{R}}
  \left(t^2+\frac{\gamma^2(v^{(i)}-u^{(i)})^2}{(\beta+\gamma)^2}\right)
  \exp\left[-\frac{\alpha(\beta+\gamma)}{2}t^2\right]\ud t \\
  =&\exp\left(-\frac{\alpha\beta\gamma
  (u^{(i)}-v^{(i)})^2}{2(\beta+\gamma)}\right)
  \left(\frac{2\pi}{\alpha(\beta+\gamma)}\right)^{\frac{1}{2}}
  \left(\frac{1}{\alpha(\beta+\gamma)}+
  \frac{\gamma^2(v^{(i)}-u^{(i)})^2}{(\beta+\gamma)^2}\right).
\end{align*}
Thus, it follows that
\begin{align*}
  \int_{\mathbb{R}^d}\varphi(x)\ud x=\prod_{i=1}^dI_1^{(i)}
  =&\left(\frac{2\pi}{\alpha(\beta+\gamma)}\right)^{\frac{d}{2}}
  \prod_{i=1}^d\exp\left(-\frac{\alpha\beta\gamma
  (u^{(i)}-v^{(i)})^2}{2(\beta+\gamma)}\right) \\
  =&\left(\frac{2\pi}{\alpha(\beta+\gamma)}\right)^{\frac{d}{2}}
  \exp\left(-\frac{\alpha\beta\gamma\sum_{i=1}^d
  (u^{(i)}-v^{(i)})^2}{2(\beta+\gamma)}\right) \\
  =&\left(\frac{2\pi}{\alpha(\beta+\gamma)}\right)^{\frac{d}{2}}
  \exp\left(-\frac{\alpha\beta\gamma\|u-v\|_2^2}{2(\beta+\gamma)}\right);
\end{align*}
and similarly,
\begin{align*}
  \int_{\mathbb{R}^d}\!\|x-u\|_2^2\varphi(x)\ud x
  =&\sum_{i=1}^d\int_{\mathbb{R}^d}\!\big(x^{(i)}-u^{(i)}\big)^2
  \exp\left[-\alpha\left(\frac{\beta}{2}\|x-u\|_2^2+
  \frac{\gamma}{2}\|x-v\|_2^2\right)\right]\ud x \\
  =&\sum_{i=1}^d\left(I_2^{(i)}\prod_{j\neq i}I_1^{(j)}\right) \\
  =&\left(\frac{2\pi}{\alpha(\beta\!+\!\gamma)}\right)^{\frac{d}{2}}
  \!\exp\left(-\frac{\alpha\beta\gamma\|u\!-\!v\|_2^2}
  {2(\beta+\gamma)}\right)\!\left(\frac{d}{\alpha(\beta\!+\!\gamma)}
  +\frac{\gamma^2\|u\!-\!v\|_2^2}{(\beta+\gamma)^2}\right).
\end{align*}
so the proof is complete.\qed
\end{proof}

\bibliographystyle{spmpsci}
\bibliography{MReferences}


\end{document}